    \newcommand{\href}[2]{#2}
\newtheorem{theorem}{Theorem}[section]
\newtheorem{lemma}[theorem]{Lemma}
\newtheorem{proposition}[theorem]{Proposition}
\newtheorem{conjecture}[theorem]{Conjecture}
\theoremstyle{definition}
\newtheorem{definition}[theorem]{Definition}
\newtheorem{example}[theorem]{Example}
\theoremstyle{remark}
\newtheorem{remark}[theorem]{Remark}
\newtheorem{step}{Step}
\numberwithin{equation}{section}
 \DeclareFontFamily{U}{manual}{}
 \DeclareFontShape{U}{manual}{m}{n}{ <->  manfnt }{}
 \newcommand{\manfntsymbol}[1]{%
    {\fontencoding{U}\fontfamily{manual}\selectfont\symbol{#1}}}
\endgroup\end{trivlist}}
  \DeclareFontFamily{OT1}{pzc}{}
  \DeclareFontShape{OT1}{pzc}{m}{it}{<-> s * [1.100] pzcmi7t}{}
  \DeclareMathAlphabet{\mathpzc}{OT1}{pzc}{m}{it}
\newif\ifhascomments \hascommentstrue
  \newcommand{\jason}[1]{{\color{red}[[\ensuremath{\bigstar\bigstar\bigstar} #1]]}}
  \newcommand{\matt}[1]{{\color{red}[[\ensuremath{\spadesuit\spadesuit\spadesuit} #1]]}}
  \newcommand{\fei}[1]{{\color{red}[[\ensuremath{\clubsuit\clubsuit\clubsuit} #1]]}}
  \newcommand{\jason}[1]{}
  \newcommand{\matt}[1]{}
  \newcommand{\fei}[1]{}
\newcommand{\Qbar}{\overline{\mathbb{Q}}}
\renewcommand{\AA}{\mathbb{A}}
\DeclareMathOperator{\Aut}{\ensuremath{\mathcal{A}\kern-.125em\mathpzc{ut}}}
\newcommand{\CC}{\mathbb C}
\newcommand{\dra}{\dashrightarrow}
\renewcommand{\emptyset}{\varnothing}
\DeclareMathOperator{\Endo}{\ensuremath{\mathcal{E}\kern-.125em\mathpzc{nd}}}
\newcommand{\OO}{\mathcal{O}}
\newcommand{\bP}{\mathbb{P}}
\DeclareMathOperator{\Hom}{\ensuremath{\mathcal{H}\kern-.125em\mathpzc{om}}}
\newcommand{\NN}{\mathbb N}
\newcommand{\PP}{\mathbb{P}}
\newcommand{\QQ}{\mathbb Q}
\renewcommand{\setminus}{\smallsetminus}
\newcommand{\ZZ}{\mathbb{Z}}
 \def\are[#1]{\ar[#1]^{\txt{\'et}}}
 \def\areh[#1]{\ar[#1]|{\txt{$H$-eq}}^{\txt{\'et}}}
 \def\ars[#1]{\ar@{->>}[#1]}
 \newcommand{\dplus}{\ar@{}[d]|{\mbox{$\oplus$}}}
 \newcommand{\dtimes}{\ar@{}[d]|{\mbox{$\times$}}}
\newcommand{\KK}{{\mathbb K}}
\newcommand{\arxiv}[1]{\href{https://arxiv.org/abs/#1}{{\tt arXiv:#1}}}
\newcommand{\ol}[1]{\overline{#1}}
\newcommand{\den}{\overline{d}}
\newcommand{\cO}{{\mathcal O}}
\newcommand{\bn}{\mathbf{n}}
\newcommand{\bz}{\mathbf{z}}
\newcommand{\bT}{\mathbf{T}}
\title[Height Gap Conjectures, $D$-Finiteness, and Weak DML]{Height Gap Conjectures, $D$-Finiteness, and Weak Dynamical Mordell--Lang}
\author{Jason P. Bell}
\address{University of Waterloo, Department of Pure Mathematics, Waterloo, Ontario, N2L 3G1, Canada}
\email{\href{mailto:jpbell@uwaterloo.ca}{jpbell@uwaterloo.ca}}
\author{Fei Hu}
\address{University of Waterloo, Department of Pure Mathematics, Waterloo, Ontario, N2L 3G1, Canada}
\email{\href{mailto:f8hu@uwaterloo.ca}{f8hu@uwaterloo.ca}}
\author{Matthew Satriano}
\address{University of Waterloo, Department of Pure Mathematics, Waterloo, Ontario, N2L 3G1, Canada}
\email{\href{mailto:msatrian@uwaterloo.ca}{msatrian@uwaterloo.ca}}
\thanks{The first and third authors were partially supported by Discovery Grants from the National Science and Engineering Research Council of Canada. The second author was partially supported by a postdoctoral fellowship at the University of Waterloo.}
\keywords{Weil height, Dynamical Mordell--Lang, rational maps, $D$-finite power series}
\subjclass[2010]{11G50, 37P55, 14E05}
\begin{document}

\begin{abstract}
In previous work, the first author, Ghioca, and the third author introduced a broad dynamical framework giving rise to many classical sequences from number theory and algebraic combinatorics. Specifically, these are sequences of the form $f(\Phi^n(x))$, where $\Phi\colon X\dasharrow X$ and $f\colon X\dasharrow\mathbb{P}^1$ are rational maps defined over $\overline{\mathbb{Q}}$ and $x\in X(\overline{\mathbb{Q}})$ is a point whose forward orbit avoids the indeterminacy loci of $\Phi$ and $f$. They conjectured that if the sequence is infinite, then $\limsup \frac{h(f(\Phi^n(x)))}{\log n} > 0$. They also made a corresponding conjecture for $\liminf$ and showed that it implies the Dynamical Mordell--Lang Conjecture. In this paper, we prove the $\limsup$ conjecture as well as the $\liminf$ conjecture away from a set of density $0$. As applications, we prove results concerning the growth rate of coefficients of $D$-finite power series as well as the Dynamical Mordell--Lang Conjecture up to a set of density $0$.
\end{abstract}

\maketitle
\tableofcontents


\section{Introduction}
\label{sec:intro}

In \cite{BGS}, the authors introduced a broad dynamical framework giving rise to many classical sequences from number theory and algebraic combinatorics. In particular, this construction yields all sequences whose generating functions are $D$-finite, i.e., those satisfying homogeneous linear differential equations with rational function coefficients.  This class, in turn, contains all hypergeometric series (see, e.g., \cite{WZ, G09}), all series related to integral factorial ratios \cite{Bober09, Sound}, generating functions for many classes of lattice walks \cite{DHRS18}, diagonals of rational functions \cite{Lipshitz88}, algebraic functions \cite{Lipshitz89}, generating series for the cogrowth of many finitely presented groups \cite{GP}, as well as generating functions of numerous classical combinatorial sequences (see Stanley \cite[Chapter 6]{Stan} and the examples therein).  In \cite{BGS}, they also stated the so-called $\limsup$ and $\liminf$ Height Gap Conjectures, which if true, would imply both the Dynamical Mordell--Lang Conjecture as well as results concerning the growth rate of coefficients of $D$-finite power series. The goal of this paper is to prove a uniform version of the $\limsup$ Height Gap Conjecture and to prove the $\liminf$ version away from a set of density zero. Consequently, we obtain applications to $D$-finite power series and a weak version of the Dynamical Mordell--Lang Conjecture.

To state our results, we fix the following notation. Throughout, we let $\NN$ (resp.~$\ZZ^+$) denote the set of all non-negative (resp.~positive) integers.
Let $h(\cdot)$ denote the absolute logarithmic Weil height function.
We refer the reader to \cite[Chapter~2]{BG06} and \cite[Chapter~3]{GTM241} for the main properties of height functions. 
Given an arbitrary rational map $g$, let $I_g$ denote its indeterminacy locus.
If $\Phi$ is a rational self-map of a quasi-projective variety $X$ defined over $\Qbar$, then we let $X_{\Phi}(\Qbar)$ denote the subset of points $x\in X(\Qbar)$ such that for all $n\in \NN$, the $n$-th iterate $\Phi^{n}(x)$ avoids $I_\Phi$; for such an $x\in X_\Phi(\Qbar)$, we let $\cO_\Phi(x)$ denote its forward orbit under $\Phi$. Lastly, if $f\colon X\dra \bP^1$ is a rational function, let $X_{\Phi,f}(\Qbar)\subseteq X_\Phi(\Qbar)$ be the subset of points $x$ with $I_f\cap\cO_\Phi(x)=\varnothing$.

The following conjecture was introduced in \cite{BGS}.

\begin{conjecture}[{$\limsup$ Height Gap Conjecture, cf.~\cite[Conjecture~1.4]{BGS}}]
\label{conj:limsup}
Let $X$ be a quasi-projective variety, let $\Phi\colon X\dra X$ be a rational self-map, and let $f\colon X\dra \bP^1$ be a non-constant rational function, all defined over $\Qbar$.
Then for any $x\in X_{\Phi,f}(\Qbar)$, either $f(\cO_\Phi(x))$ is finite, or
\[
\limsup_{n\to\infty} \frac{h(f(\Phi^n(x)))}{\log n}>0.
\]
\end{conjecture}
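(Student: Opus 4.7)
The plan is to argue by contradiction. Suppose $f(\mathcal{O}_\Phi(x))$ is infinite but $\limsup_n h(y_n)/\log n = 0$, where $y_n := f(\Phi^n(x))$; this is equivalent to $h(y_n) = o(\log n)$. Compactifying $X$ and resolving the indeterminacy of $f$ on a birational model --- changing heights only by $O(1)$ --- I may assume $X$ is projective and $f\colon X \to \mathbb{P}^1$ is a morphism. Replacing $X$ by the Zariski closure of the orbit, I may further assume $\Phi$ is dominant and $\mathcal{O}_\Phi(x)$ is Zariski dense in $X$. Fix a number field $K$ of degree $d$ over which everything is defined, so $y_n \in \mathbb{P}^1(K)$ for all $n$.

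The key geometric construction is the joint rational map
\[
F_k := (f,\ f\circ\Phi,\ \ldots,\ f\circ\Phi^{k-1}) \colon X \dashrightarrow (\mathbb{P}^1)^k.
\]
The dimensions $\dim\overline{F_k(X)}$ are non-decreasing in $k$ and bounded by $\dim X$, so they stabilize at some $\iota^* \le \dim X$. If $\iota^* < \dim X$, the equivalence relation ``$x \sim x'$ iff $F_\infty(x) = F_\infty(x')$'' is $\Phi$-invariant, so $\Phi$ descends to a dominant rational self-map $\bar\Phi$ of the quotient variety $Y$ of dimension $\iota^*$; the sequence $y_n = \bar{f}(\bar\Phi^n(\bar{x}))$ is preserved, where $\bar{f}$ is the first-coordinate projection and $\bar{x} := F_\infty(x)$. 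Replacing $(X,\Phi,f,x)$ by $(Y,\bar\Phi,\bar{f},\bar{x})$ strictly decreases the dimension while preserving Zariski density of the orbit and the field of definition; iterating, I may assume $\iota^* = \dim X$, so that $F_k$ is generically finite onto its image, of some generic degree $M$, for some $k$ determined by the dynamical data.

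The counting step then closes the argument. Since the orbit is infinite, the points $\Phi^n(x)$ for $n < N$ are distinct, and Zariski density ensures that all but finitely many of them avoid the proper subvariety where $F_k$ has degenerate fibers; hence the tuples $T_n := F_k(\Phi^n(x)) = (y_n, y_{n+1}, \ldots, y_{n+k-1})$ realize at least $N/M - O(1)$ distinct values in $(\mathbb{P}^1)^k(K)$. Under the contradiction hypothesis, for any fixed $\epsilon > 0$ and all $n$ large we have $h(y_n) \le \epsilon \log n$, so each coordinate of each $T_n$ with $n \le N$ has Weil height at most $\epsilon \log N + O(1)$. Schanuel's theorem bounds the number of $v \in \mathbb{P}^1(K)$ of height $\le B$ by $O(e^{2dB})$, so the number of admissible tuples is at most $O(N^{2dk\epsilon})$. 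Comparing,
\[
\frac{N}{M} - O(1) \;\le\; O(N^{2dk\epsilon}),
\]
which for large $N$ forces $\epsilon \ge 1/(2dk)$. Since $\epsilon$ can be taken arbitrarily small, this contradicts the hypothesis, and in fact extracts the uniform lower bound $\limsup_n h(y_n)/\log n \ge 1/(2dk) > 0$.

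The main obstacle I foresee lies in the dimension-reduction descent. One must verify carefully that the quotient $Y$ is a genuine algebraic variety, that $\bar\Phi$ is well-defined as a dominant rational self-map --- a consequence of the $\Phi$-invariance of the equivalence relation --- and that after descent one can still resolve the indeterminacy of $\bar{f}$ to a morphism while maintaining $\bar{x} \in Y_{\bar\Phi,\bar{f}}(\overline{\mathbb{Q}})$. A secondary technical point is to track the stabilization index $k$ and the generic degree $M$ through the iterated descents so that the final constant $1/(2dk)$ is genuinely uniform, depending only on the original data $(X,\Phi,f)$ and the field of definition, as required by the uniform version stated in the introduction.
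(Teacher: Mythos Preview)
Your core strategy---package consecutive values into tuples $T_n=(y_n,\ldots,y_{n+k-1})$ and use Schanuel's theorem to force a collision---is exactly the paper's. The divergence is in how you convert a collision into a contradiction, and there your argument has a genuine gap.

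You write that ``Zariski density ensures that all but finitely many of them avoid the proper subvariety where $F_k$ has degenerate fibers.'' This is not true in general: a Zariski-dense orbit may meet a fixed proper closed subvariety $D$ infinitely often. The assertion that $\{n:\Phi^n(x)\in D\}$ is finite for every such $D$ is far stronger than the Dynamical Mordell--Lang Conjecture, which would only say this return set is a finite union of arithmetic progressions together with a density-zero set---and DML is itself one of the targets of this circle of results. Without that claim your lower bound of $N/M-O(1)$ on the number of distinct tuples is unjustified, and the comparison with the Schanuel upper bound collapses. The descent to $Y$, even if made rigorous, does not rescue this step: ``$\bar F_k$ birational onto its image'' still does not give ``$\bar F_k$ injective along the orbit of $\bar x$.''

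The paper bypasses both the descent and the degenerate-fiber issue with a short Noetherian lemma. Working directly on $U=X_{\Phi,f}$ with the subspace topology (no compactification, no resolution of $f$), the closed sets
\[
Z_n=\{(u,v)\in U\times U : f(\Phi^i(u))=f(\Phi^i(v))\ \text{for all}\ i\le n\}
\]
form a descending chain which stabilizes at some $Z_\ell$. This says precisely that agreement of the first $\ell+1$ values forces agreement of \emph{all} subsequent values. Hence a \emph{single} collision $T_i=T_j$ with $i<j$---which Schanuel produces immediately from $h(y_n)=o(\log n)$---already yields $y_{n+i}=y_{n+j}$ for every $n\ge 0$, so $(y_n)$ is eventually periodic and $f(\mathcal{O}_\Phi(x))$ is finite. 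Observe that this Noetherian stabilization is exactly the injectivity statement you would need on your quotient $Y$ to make the counting work; applying it directly on $U\times U$ makes the entire descent construction unnecessary.
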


Our first main result is a simple proof of Conjecture~\ref{conj:limsup}. This generalizes \cite[Theorem~1.3]{BGS}, which handled the case where $\Phi$ and $f$ are morphisms.

\begin{theorem}[$\limsup$ Height Gaps]
\label{thm:limsup}
Conjecture~\ref{conj:limsup} is true.
\end{theorem}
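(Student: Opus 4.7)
I will argue by contradiction using a Northcott--Schanuel count of algebraic points of bounded height. Fix a number field $K \subseteq \Qbar$ of degree $d$ over $\QQ$ containing the field of definition of $X$, $\Phi$, $f$, and the coordinates of $x$, so that $b_n := f(\Phi^n(x))$ lies in $\PP^1(K)$ for every $n$. Suppose, for contradiction, that $f(\cO_\Phi(x))$ is infinite and $\limsup_{n\to\infty} h(b_n)/\log n = 0$. Then for each $\epsilon > 0$ we have $h(b_n) < \epsilon \log n$ for all sufficiently large $n$, and Schanuel's theorem
\[
\#\{P \in \PP^1(K) : h(P) \le H\} \le c_K \, e^{2dH}
\]
forces the number $v(N) := \#\{b_0, \ldots, b_N\}$ of distinct values to satisfy $v(N) \le c_K N^{2d\epsilon}$ for $N$ large. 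Since $\epsilon > 0$ is arbitrary, $v(N) = N^{o(1)}$.

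I would next reduce to the case where the orbit $\cO_\Phi(x)$ is Zariski-dense in $X$---by replacing $X$ with an irreducible component of the Zariski closure of a $\Phi$-invariant tail of the orbit and restricting $\Phi$ and $f$ accordingly---so that $f|_X$ remains non-constant. A contradiction will follow from a polynomial lower bound $v(N) \ge c' N^\beta$ for infinitely many $N$ with some $\beta > 0$ depending only on the data $(X, \Phi, f)$: combined with the Schanuel estimate, this would force $2d\epsilon \ge \beta$ for every $\epsilon > 0$, which is impossible.

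The key obstacle is producing this polynomial lower bound, since purely combinatorially $v(N) \to \infty$ is compatible with arbitrarily slow growth. My preferred approach is to exploit the algebraic rigidity of the iteration via the subfield $L := K(f, f \circ \Phi, f \circ \Phi^2, \ldots) \subseteq K(X)$: since $K(X)$ is finitely generated over $K$ (in characteristic zero), so is $L$, and $L$ has transcendence degree at most $\dim X$. The resulting algebraic relations among the rational functions $f \circ \Phi^n$ specialize to relations among the values $b_n$; combined with a Weil height estimate on $\Phi^n(x)$ in a projective compactification, I expect these to force the desired polynomial lower bound on $v(N)$. An alternative route is to try to resolve the indeterminacies of $\Phi$ on some birational model of $X$ and reduce to the morphism case proved in \cite[Theorem~1.3]{BGS}, though in general $\Phi$ need not admit any regularization as a morphism of a birational model.
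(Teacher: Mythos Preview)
Your Schanuel upper bound $v(N)=N^{o(1)}$ is correct and is exactly the counting input the paper uses. The genuine gap is in the second half: you never establish the polynomial lower bound $v(N)\ge c'N^\beta$, and the two routes you sketch are unlikely to deliver it. Finite generation of $L=K(f,f\circ\Phi,\ldots)$ only gives algebraic relations among the \emph{functions} $f\circ\Phi^n$; specializing such a relation at $x$ does not in general let you recover $b_{n}$ from $b_0,\ldots,b_{\ell}$ (the relation may be identically satisfied, or its leading coefficient may vanish at the orbit point). The birational regularization route is worse: a rational self-map of a variety need not become a morphism on any birational model, so you cannot expect to reduce to the morphism case of \cite{BGS}.

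What is missing is the paper's Lemma~\ref{l:truncated-orbit}: by a Noetherian descent on the space $U=X\setminus\bigcup_n(I_{\Phi^n}\cup I_{f\circ\Phi^n})$ with the induced topology, there is an integer $\ell$ such that the $(\ell+1)$-tuple $(b_i,\ldots,b_{i+\ell})$ determines the entire tail $(b_{i+n})_{n\ge0}$. This immediately gives your missing lower bound: if $(b_n)$ is not eventually periodic then the tuples $(b_i,\ldots,b_{i+\ell})$ for $0\le i\le N$ are pairwise distinct, so $v(N)^{\ell+1}\ge N+1$ and hence $v(N)\ge N^{1/(\ell+1)}$. Combined with your Schanuel bound (applied to $(\PP^1)^{\ell+1}$ via the Segre embedding, as the paper does, or directly to $\PP^1$ as you do), this yields the contradiction. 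So your overall strategy is sound, but the crucial algebraic rigidity you need comes from a Noetherian argument on the orbit space, not from the function-field considerations you propose.
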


In \cite{BGS}, the authors also introduced the following conjecture and showed that it implies Dynamical Mordell--Lang.

\begin{conjecture}[{$\liminf$ Height Gap Conjecture, cf.~\cite[Conjecture 1.6]{BGS}}]
\label{conj:liminf}
Let $X$, $\Phi$, $f$, and $x$ be as in Conjecture~\ref{conj:limsup}.
If $X$ is irreducible and $\cO_\Phi(x)$ is Zariski dense in $X$, then
\[
\liminf_{n\to\infty} \frac{h(f(\Phi^n(x)))}{\log n}>0.
\]
\end{conjecture}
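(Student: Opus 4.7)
The plan is to argue by contradiction. Suppose $\liminf_{n\to\infty} h(f(\Phi^n(x)))/\log n = 0$, so along some subsequence $\{n_k\}_{k\geq 1}$ one has $h(f(\Phi^{n_k}(x))) = o(\log n_k)$; call such indices \emph{bad}. The first step is to reduce to a projective setting: after blowing up, I may assume $X$ is smooth projective and that $\Phi$ and $f$ extend to morphisms on a common open $U\subseteq X$ containing $\cO_\Phi(x)$; everything remains defined over a fixed number field $K$. In this setting standard height machinery applies and I can choose a prime $\mathfrak p$ of $K$ of good reduction for $X$, $\Phi$, $f$, and $x$.

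Next I would combine Theorem~\ref{thm:limsup} with a counting/pigeonhole argument to show that the set $B_\varepsilon := \{n : h(f(\Phi^n(x))) < \varepsilon\log n\}$ has upper density zero for every $\varepsilon>0$. The idea is that if some $B_\varepsilon$ had positive upper density, then by compactness of $\bP^1(K_{\mathfrak p})$ and refinement, the values $f(\Phi^n(x))$ for $n\in B_\varepsilon$ would cluster $\mathfrak p$-adically around finitely many limit values $\alpha_1,\dots,\alpha_r\in\bP^1(K_{\mathfrak p})$. Applying Theorem~\ref{thm:limsup} to auxiliary rational functions built from $f-\alpha_j$ (after clearing denominators and reducing mod~$\mathfrak p$), together with a Northcott-type bound forced by the slow height growth, should rule out $\alpha_j$ being transcendental; this yields the density-zero assertion.

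The decisive step is then to upgrade density zero to finiteness via a $\mathfrak p$-adic analytic parametrization of the orbit. Concretely, one needs an arithmetic progression $a+b\NN$ containing infinitely many bad indices, together with a $\mathfrak p$-adic analytic map $\gamma\colon\ZZ_{\mathfrak p}\to X(K_{\mathfrak p})$ such that $\gamma(n)=\Phi^{a+bn}(x)$ for all $n\in\NN$. Granting this, after refining to a single $\mathfrak p$-adic limit value $\alpha\in\bP^1(\Qbar)$, the composition $f\circ\gamma - \alpha$ is a $\mathfrak p$-adic analytic function on $\ZZ_{\mathfrak p}$ vanishing on an infinite set, hence identically zero. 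Then $f$ takes the constant value $\alpha$ on $\{\Phi^{a+bn}(x):n\in\NN\}$, whose Zariski closure equals $X$ by Zariski density of $\cO_\Phi(x)$ in the irreducible $X$; this forces $f\equiv\alpha$, contradicting non-constancy of $f$.

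The main obstacle is precisely producing the $\mathfrak p$-adic analytic parametrization for an arbitrary rational self-map $\Phi$: this is essentially equivalent to the full Dynamical Mordell--Lang Conjecture, which is presently known only in restricted settings (étale maps via Bell--Ghioca--Tucker, automorphisms of projective surfaces, and a handful of related cases). In those cases the plan above goes through and yields Conjecture~\ref{conj:liminf}. In full generality, bridging from the density-zero bound to the pointwise bound required here appears to demand a genuine advance on DML itself, and the full $\liminf$ statement as worded seems out of reach of present techniques; the best unconditional conclusion one can extract from the above is the density-zero variant that the paper ultimately establishes.
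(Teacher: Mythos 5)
You have correctly identified that the statement you were asked to prove is Conjecture~\ref{conj:liminf}, not a theorem of the paper: the paper does not prove it, and it remains open. What the paper actually establishes is the weaker Theorem~\ref{thm:weak-liminf}, which gives the $\liminf$ bound only outside a set of upper asymptotic density zero, deduced immediately from Theorem~\ref{thm:uniform-limsup}. Since \cite{BGS} already showed that Conjecture~\ref{conj:liminf} implies the Dynamical Mordell--Lang Conjecture, any unconditional proof of it must resolve DML for arbitrary rational self-maps; your closing paragraph diagnoses this correctly, and so does your observation that the $\mathfrak p$-adic analytic parametrization (the $p$-adic arc lemma) is available only in special cases such as \'etale $\Phi$.

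However, your intermediate Step~2 --- showing that $B_\varepsilon$ has density zero --- has a gap independent of the DML obstruction, and it also diverges from how the paper gets to density zero. You argue that if $B_\varepsilon$ had positive density, then the values $f(\Phi^n(x))$ for $n\in B_\varepsilon$ would cluster $\mathfrak p$-adically around \emph{finitely many} limit values $\alpha_1,\dots,\alpha_r$. But heights satisfying $h(f(\Phi^n(x)))<\varepsilon\log n$ are unbounded, so Northcott does not give a finite list, and compactness of $\bP^1(K_{\mathfrak p})$ only yields the existence of accumulation points, not finitely many of them; the ``auxiliary rational functions built from $f-\alpha_j$'' step therefore never gets off the ground. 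The paper avoids $p$-adic input entirely in this step: Theorem~\ref{thm:uniform-limsup} is proved by a purely height-theoretic and combinatorial argument --- Schanuel counting (Theorem~\ref{thm:Schanuel}), the shift-set Proposition~\ref{prop:positive-density-shifts}, and the ``stable non-periodic dimension'' descent of Proposition~\ref{prop:strictly-decreasing} together with Lemma~\ref{lem:prec} --- which produces a single uniform $\varepsilon>0$ working for every positive-density subset $T\subseteq\NN$; density zero for the exceptional set then follows at once. If your goal is the unconditional density-zero variant, that is the route to take. If your goal is the full Conjecture~\ref{conj:liminf}, the honest answer is the one you gave: it is presently out of reach, being at least as hard as DML itself.
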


Generalizing our method of proof of Theorem \ref{thm:limsup} via a more involved technique introduced in Section~\ref{sec:uniform-limsup}, we obtain a uniform version of the above $\limsup$ height gap result for any subset $T\subseteq \NN$ of positive density. See Definition~\ref{def:density} for the notion of upper asymptotic density.

\begin{theorem}[Uniform $\limsup$ Height Gaps]
\label{thm:uniform-limsup}
Let $X$, $\Phi$, $f$, and $x$ be as in Conjecture~\ref{conj:limsup}.
Then either $f(\OO_\Phi(x))$ is finite,
or there exists an $\epsilon>0$ such that for any subset $T\subseteq \NN$ of positive density, we have
\[
\limsup_{n\in T} \frac{h(f(\Phi^n(x)))}{\log n} > \epsilon.
\]
\end{theorem}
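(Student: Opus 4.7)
The plan is to prove the theorem by showing that, for some $\epsilon > 0$, the ``small-height set''
\[
B_\epsilon := \{n\in\NN : h(f(\Phi^n(x))) \le \epsilon \log n\}
\]
has upper asymptotic density zero. This is equivalent to the desired conclusion: if $\overline{d}(B_\epsilon)=0$, then for any $T\subseteq\NN$ of positive density, $T\setminus B_\epsilon$ is infinite, and every $n$ in that set satisfies $h(f(\Phi^n(x)))/\log n > \epsilon$, so $\limsup_{n\in T} h(f(\Phi^n(x)))/\log n \ge \epsilon$. I may assume $f(\mathcal O_\Phi(x))$ is infinite; otherwise the conclusion is vacuous. (When $f(\mathcal O_\Phi(x))$ is infinite, Theorem~\ref{thm:limsup} already guarantees $B_\epsilon$ is co-infinite for some $\epsilon$, but not density-zero.)

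My first step is a Northcott counting bound. Fix a number field $K$ over which $X$, $\Phi$, $f$, and $x$ are all defined; then $\alpha_n := f(\Phi^n(x)) \in \bP^1(K)$ for all admissible $n$. By Northcott's theorem (in the sharp form of Schanuel), there is a constant $C_K$, depending only on $[K:\QQ]$, such that $\#\{\alpha\in\bP^1(K) : h(\alpha) \le H\} \le c_K e^{C_K H}$. Consequently, the $f$-values $\{\alpha_n : n \in B_\epsilon \cap [1,N]\}$ are drawn from a set of size at most $c_K N^{C_K \epsilon}$, which is $o(N)$ once I fix $\epsilon < 1/C_K$.

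Next, assume for contradiction that $\overline{d}(B_\epsilon) \ge \delta > 0$. Applying pigeonhole to the counting bound yields, for infinitely many $N$, a value $\alpha_N \in \bP^1(K)$ hit by at least $\tfrac{\delta}{2c_K} N^{1 - C_K \epsilon}$ indices $n \in B_\epsilon \cap [1,N]$; in particular, the orbit meets the proper subvariety $Y_{\alpha_N} := f^{-1}(\alpha_N) \subset X$ in a set of positive lower density. To convert this into a contradiction with Theorem~\ref{thm:limsup}, I would recursively restrict the dynamical system: some residue class modulo a suitable $k$ captures a positive-density portion of $\{n : \Phi^n(x) \in Y_{\alpha_N}\}$, and passing to the $\Phi^k$-dynamics on (a component of) the Zariski closure of the restricted orbit yields a new pair $(\Phi', f')$ on a strictly lower-dimensional variety, to which Theorem~\ref{thm:limsup} reapplies. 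Iterating this reduction terminates by dimension count, ultimately forcing $f(\mathcal O_\Phi(x))$ to be finite.

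The main obstacle is precisely this recursion. Since $\alpha_N$ depends on $N$, a diagonal-extraction argument is needed to isolate a single value $\alpha$ attained in positive density, and the parameters $\epsilon$, $k$, and the dimension at each stage must be balanced so that the Northcott loss $N^{C_K \epsilon}$ at every level does not erode the positive-density hypothesis below the threshold required to continue the induction. Carrying out this quantitative bookkeeping — calibrating $\epsilon$, $k$, and the dimension-reduction step simultaneously — is the ``more involved technique'' promised in Section~\ref{sec:uniform-limsup}, going beyond the qualitative argument sufficient for Theorem~\ref{thm:limsup}.
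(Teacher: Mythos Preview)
Your reformulation in the first paragraph is correct and is exactly how the paper deduces Theorem~\ref{thm:weak-liminf} from Theorem~\ref{thm:uniform-limsup}: showing $\overline d(B_\epsilon)=0$ for some fixed $\epsilon>0$ is equivalent to the uniform $\limsup$ statement. The Schanuel counting bound is also correctly set up. But from that point on the argument has a genuine gap, and the approach you sketch is not the one the paper takes.

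First, a quantitative error: pigeonhole gives you a value $\alpha_N$ hit at least $\tfrac{\delta}{2c_K}N^{1-C_K\epsilon}$ times in $[1,N]$, but this is \emph{not} positive lower density --- dividing by $N$ gives $N^{-C_K\epsilon}\to 0$. So the set $\{n:\Phi^n(x)\in Y_{\alpha_N}\}$ need not have positive density, and your recursive step does not get off the ground. Second, even granting the count, the value $\alpha_N$ genuinely depends on $N$, and there is no ``diagonal extraction'' that produces a single $\alpha$ hit with positive density: one can build sequences taking $o(N)$ distinct values on $[1,N]$ with every individual value appearing with density zero. Your final paragraph essentially concedes that the core difficulty is unresolved.

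The paper circumvents both issues by never trying to isolate a single fiber $f^{-1}(\alpha)$. Instead it works in $U\times U$ with the closed set $Z_0=\{(u,v):f(u)=f(v)\}$ and introduces two new devices: the \emph{shift set} $\Sigma(T)=\{i:\overline d(T\cap(T+i))>0\}$, shown to have positive density whenever $T$ does, and a \emph{stable non-periodic dimension} $\nu(Z)\in(\NN\cup\{-\infty\})^2$. The key Proposition~\ref{prop:strictly-decreasing} shows that for infinitely many $j\in\Sigma(T)$ one has $\nu(Z\cap(\Phi,\Phi)^{-j}(Z))\prec\nu(Z)$; iterating produces a bounded-length chain $Z_0\supsetneq Z_1\supsetneq\cdots\supsetneq Z_m$ with $Z_m$ periodic under $(\Phi,\Phi)$, and shifts $i_0<\cdots<i_m$ all compatible with $T$. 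Schanuel is then applied to \emph{tuples} $(f(\Phi^{j+s}(x)))_{s\in S}$, where $S$ is the set of subset sums of $\{i_0,\dots,i_m\}$ and has size at most $2^{(d+1)^2}$; this is what allows $\epsilon$ to be chosen uniformly in $T$. A collision among these tuples, combined with the periodicity of $Z_m$, forces $f(\Phi^n(x))$ to be eventually periodic, contradicting the infiniteness of $f(\cO_\Phi(x))$. The passage to pairs and the shift-set/$\nu$-invariant machinery is the ``more involved technique'' you allude to, and it is not a refinement of your fiber-by-fiber recursion but a different structure altogether.
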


The significance of our above uniform bound is that it implies the $\liminf$ Height Gap Conjecture~\ref{conj:liminf} away from a set of density zero.

\begin{theorem}[Weak $\liminf$ Height Gaps]
\label{thm:weak-liminf}
Let $X$, $\Phi$, $f$, and $x$ be as in Conjecture~\ref{conj:limsup}.\footnote{Note that Conjecture \ref{conj:liminf} is stated only for $X$ is irreducible and $\cO_\Phi(x)$ Zariski dense which is necessary as shown in an example of \cite{BGS}, however our result holds without these hypotheses.}
If $f(\cO_\Phi(x))$ is infinite, then there is a constant $C>0$ and a set $S\subset \NN$ of upper asymptotic density zero such that
\[
h(f(\Phi^n(x))) > C \log n
\]
whenever $n\not\in S$, or equivalently,
\[
\liminf_{n\in \NN\setminus S} \frac{h(f(\Phi^n(x)))}{\log n} > 0.
\]
\end{theorem}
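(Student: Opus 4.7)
The plan is to derive Theorem~\ref{thm:weak-liminf} as an essentially immediate consequence of the uniform $\limsup$ result Theorem~\ref{thm:uniform-limsup}. The key point is that the uniformity in Theorem~\ref{thm:uniform-limsup}---a single $\epsilon > 0$ works for \emph{every} subset $T \subseteq \NN$ of positive upper asymptotic density---is precisely what is needed to force the exceptional set on which the height grows too slowly to be small.

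Concretely, since $f(\cO_\Phi(x))$ is infinite, Theorem~\ref{thm:uniform-limsup} furnishes an $\epsilon > 0$ such that $\limsup_{n \in T} h(f(\Phi^n(x)))/\log n > \epsilon$ for every $T \subseteq \NN$ of positive upper asymptotic density. I would fix $C = \epsilon/2$ and define the candidate exceptional set
\[
S \;:=\; \{0, 1\} \cup \bigl\{ n \geq 2 : h(f(\Phi^n(x))) \leq C \log n \bigr\}.
\]
By construction, every $n \notin S$ satisfies $n \geq 2$ and $h(f(\Phi^n(x))) > C \log n$, so the desired lower bound holds on the complement of $S$. It then remains to show that $S$ has upper asymptotic density zero. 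Arguing by contradiction, suppose $S$ had positive upper density. Applying Theorem~\ref{thm:uniform-limsup} with $T = S$ would yield
\[
\limsup_{n \in S} \frac{h(f(\Phi^n(x)))}{\log n} > \epsilon;
\]
however, every $n \in S \setminus \{0,1\}$ satisfies $h(f(\Phi^n(x)))/\log n \leq C = \epsilon/2$, which forces the $\limsup$ to be at most $\epsilon/2 < \epsilon$, a contradiction. The equivalent liminf formulation then follows at once, since $h(f(\Phi^n(x)))/\log n > C$ for all $n \in \NN \setminus S$ gives $\liminf_{n \in \NN \setminus S} h(f(\Phi^n(x)))/\log n \geq C > 0$.

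I do not anticipate a genuine obstacle in executing this plan: the deep content of the theorem is entirely packaged into Theorem~\ref{thm:uniform-limsup}, and the present statement is a clean corollary obtained by a one-step contrapositive argument exploiting the uniformity of $\epsilon$. The only cosmetic care required is to exclude the finitely many indices $n \in \{0,1\}$ on which $\log n$ is non-positive, which is absorbed harmlessly into $S$ without affecting its density.
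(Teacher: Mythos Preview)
Your proposal is correct and follows essentially the same approach as the paper: define the exceptional set $S$ as those $n$ with $h(f(\Phi^n(x)))/\log n \le \epsilon/2$, and observe that $S$ must have density zero since otherwise applying Theorem~\ref{thm:uniform-limsup} with $T=S$ yields a contradiction. Your treatment is in fact slightly more careful than the paper's, which glosses over the issue of $\log n$ at $n\in\{0,1\}$.
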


As an application of Theorem \ref{thm:limsup}, we obtain a simple proof of the univariate version of a result of Bell--Nguyen--Zannier \cite{BNZ} which, in turn, generalized results of van der Poorten--Shparlinski \cite{vdPS} with the aid of \cite{BC17}.

We recall that a power series $F(z)\in \Qbar[[z]]$ is {\it $D$-finite}, if it is the solution of a non-trivial homogeneous linear differential equation with coefficients in the rational function field $\Qbar(z)$; this is equivalent to saying that the coefficients of $F(z)$ satisfy certain linear recurrence relations with polynomial coefficients (see \cite[Theorem~1.5]{Stanley80}).

\begin{theorem}[Height gaps for $D$-finite power series]
\label{thm:D-finite-gap}
If $\sum_{n\geq0} a_n z^n\in \Qbar[[z]]$ is $D$-finite and $\displaystyle \limsup_{n\to\infty} \frac{h(a_n)}{\log n} = 0$, then the sequence $(a_n)_{n\in \NN}$ is eventually periodic.
\end{theorem}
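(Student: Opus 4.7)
The plan is to realize the sequence $(a_n)$ as the orbit data of a rational dynamical system of the kind appearing in \cite{BGS}, apply Theorem~\ref{thm:limsup} to conclude that $(a_n)$ takes only finitely many values in $\Qbar$, and then upgrade ``finitely valued'' to ``eventually periodic'' via a classical structural property of $D$-finite sequences.

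For the encoding, since $F(z) = \sum a_n z^n$ is $D$-finite, the $a_n$ satisfy a nontrivial recurrence $\sum_{i=0}^{d} P_i(n)\, a_{n+i} = 0$ with $P_i \in \Qbar[n]$ and $P_d \not\equiv 0$. Choose $N$ large enough that $P_d(n) \neq 0$ for all integers $n \geq N$. Take $X = \AA^{d+1}$ with coordinates $(t, y_1, \ldots, y_d)$, let $f \colon X \to \bP^1$ be the projection $(t, y_1, \ldots, y_d) \mapsto y_1$, and define $\Phi \colon X \dra X$ by
$$\Phi(t, y_1, \ldots, y_d) = \Bigl(t+1,\ y_2,\ \ldots,\ y_d,\ -\tfrac{1}{P_d(t)}\textstyle\sum_{i=0}^{d-1} P_i(t)\, y_{i+1}\Bigr).$$
Setting $x := (N, a_N, a_{N+1}, \ldots, a_{N+d-1})$, the recurrence gives by induction $f(\Phi^n(x)) = a_{N+n}$ for all $n \geq 0$, and the choice of $N$ ensures that $\OO_\Phi(x)$ avoids $I_\Phi$ (while $I_f = \varnothing$).

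For the height-gap step, the hypothesis $\limsup_n h(a_n)/\log n = 0$ is invariant under the shift $n \mapsto N + n$ because $\log(N+n)/\log n \to 1$, hence $\limsup_n h(f(\Phi^n(x)))/\log n = 0$. By Theorem~\ref{thm:limsup}, the only remaining alternative is that $f(\OO_\Phi(x)) = \{a_n : n \geq N\}$ is finite; combined with the finitely many initial terms, the whole sequence $(a_n)$ takes only finitely many distinct values in $\Qbar$.

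It remains to observe that a $\Qbar$-valued $D$-finite sequence taking only finitely many values is eventually periodic. The standard asymptotic analysis of the associated linear recurrence shows that any bounded $D$-finite sequence over $\Qbar$ is eventually a $\Qbar$-linear combination of geometric sequences $\zeta^n$ with $\zeta$ a root of unity, and any common multiple $T$ of the orders of the relevant $\zeta$'s then satisfies $a_{n+T} = a_n$ for all $n$ sufficiently large. This last structural step is the main obstacle, since it is the only part of the argument not directly supplied by Theorem~\ref{thm:limsup}; the encoding and application above are routine bookkeeping, translating a polynomial-coefficient recurrence into a rational self-map.
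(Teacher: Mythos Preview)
Your encoding of the $D$-finite recurrence as a rational dynamical system and your application of Theorem~\ref{thm:limsup} match the paper's approach exactly; the paper simply cites \cite[Section~3.2.1]{DML-book} for the construction you wrote out by hand, and then says Theorem~\ref{thm:limsup} ``immediately'' gives the result.

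The difference is your third step. You correctly notice that the \emph{statement} of Theorem~\ref{thm:limsup} only yields finiteness of $f(\OO_\Phi(x))$, not eventual periodicity of the sequence, and you try to bridge this with ``standard asymptotic analysis'' of $D$-finite recurrences. That appeal is not justified as written: the claim that a bounded $D$-finite sequence over $\Qbar$ is eventually a $\Qbar$-combination of $\zeta^n$ with $\zeta$ roots of unity does not drop out of routine asymptotics for polynomial-coefficient recurrences. The correct statement---a $\Qbar$-valued $D$-finite sequence taking finitely many values is eventually periodic---is true, but it is itself a nontrivial theorem (essentially \cite{BC17}), and invoking it here would undercut the point of Theorem~\ref{thm:D-finite-gap}, which is advertised precisely as a new, simple proof of this circle of results.

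The fix the paper tacitly relies on is much cheaper: the \emph{proof} of Theorem~\ref{thm:limsup} already establishes eventual periodicity of $n\mapsto f(\Phi^n(x))$, not merely finiteness of the value set. In the last paragraph of that proof, Lemma~\ref{l:truncated-orbit} plus pigeonhole on the tuples $y_i=(f(\Phi^i(x)),\ldots,f(\Phi^{i+\ell}(x)))$ forces $f(\Phi^{n+i}(x))=f(\Phi^{n+j}(x))$ for all $n\ge 0$. Equivalently, once you know the $a_n$ take finitely many values, so do the $(\ell+1)$-tuples $(a_n,\ldots,a_{n+\ell})$, and Lemma~\ref{l:truncated-orbit} applied to your own $(X,\Phi,f)$ gives periodicity directly---no external structure theorem for $D$-finite sequences is needed.
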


As an application of Theorem \ref{thm:weak-liminf}, we show that the Dynamical Mordell--Lang Conjecture holds away from a set of density zero. We note that this result was obtained in \cite[Corollary~1.5]{DML-noetherian} using the upper Banach density function.

\begin{theorem}[Weak Dynamical Mordell--Lang]
\label{thm:weak-DML}
Let $X$ be a quasi-projective variety, $\Phi\colon X\dra X$ a rational self-map, and $Y\subseteq X$ a subvariety of $X$, all defined over $\Qbar$.
If $x\in X_\Phi(\Qbar)$, then $\{n\in\NN : \Phi^n(x)\in Y\}$ is a union of finitely many arithmetic progressions along with a set of upper asymptotic density zero.
\end{theorem}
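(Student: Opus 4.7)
The plan is to deduce Theorem~\ref{thm:weak-DML} from Theorem~\ref{thm:weak-liminf} by Noetherian induction on closed subvarieties of $X$, combined with an outer induction on $\dim X$. Call a closed subvariety $W\subseteq X$ \emph{good} if $\{n\in\NN : \Phi^n(x)\in W\}$ is a union of finitely many arithmetic progressions together with a set of upper asymptotic density zero; goodness is stable under finite unions. The goal is to show that $Y$ is good, and we may inductively assume every proper closed subvariety arising below is good.

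I would begin with standard reductions. Decomposing $Y$ into irreducible components reduces to $Y$ irreducible. Replacing $X$ by the Zariski closure $Z=\overline{\cO_\Phi(x)}$ and $\Phi$ by its induced rational self-map of $Z$ (which is well-defined since $\cO_\Phi(x)$ is forward-invariant and avoids $I_\Phi$), we reduce to the case where $\cO_\Phi(x)$ is Zariski dense in an irreducible ambient space; if $\dim Z<\dim X$, the outer induction on $\dim X$ applies. Passing to the normalization of $X$ ensures every rational function on $X$ has indeterminacy locus of codimension at least two. If $Y\supseteq X$, the return set is $\NN$, so assume $Y\subsetneq X$.

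Choose a non-constant rational function $f\colon X\dra\bP^1$ vanishing on $Y$ (for instance, from two sections of a sufficiently ample line bundle on a projective closure of $X$, one vanishing on $Y$ and the other not). Zariski density of $\cO_\Phi(x)$ in the irreducible $X$, together with $I_f$ being a proper subvariety, forces $f(\cO_\Phi(x))$ to be infinite (otherwise the orbit lies in a finite union of proper subvarieties $\overline{f^{-1}(a)}\cup I_f$, contradicting irreducibility). One has
\[
\{n:\Phi^n(x)\in Y\}\subseteq \{n:\Phi^n(x)\in I_f\}\cup\bigl\{n:\Phi^n(x)\notin I_f\ \text{and}\ f(\Phi^n(x))=0\bigr\}.
\]
Granting momentarily that $x\in X_{\Phi,f}(\Qbar)$, Theorem~\ref{thm:weak-liminf} yields $C>0$ and a density-zero set $S$ with $h(f(\Phi^n(x)))>C\log n$ for $n\notin S$; since $h(0)=0$, the second set on the right is contained in $S$ up to finitely many exceptions, hence density zero. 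The first set is good by the Noetherian inductive hypothesis applied to the proper subvariety $I_f\subsetneq X$.

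The main obstacle is the hypothesis $x\in X_{\Phi,f}(\Qbar)$ in Theorem~\ref{thm:weak-liminf}: the orbit may meet $I_f$ infinitely often. I would address this by first applying the Noetherian inductive hypothesis to $I_f$ to decompose $\{n:\Phi^n(x)\in I_f\}=\bigcup_j(a_j+b_j\NN)\cup S_0$ with $S_0$ of density zero. Along each arithmetic progression $a_j+b_j\NN$, the sub-orbit $\{\Phi^{a_j+b_jn}(x)\}_n$ lies entirely in $I_f$, and its intersection with $Y$ is governed by the return set of the system $(\Phi^{b_j},\Phi^{a_j}(x))$ to $Y\cap I_f\subsetneq Y$; by a further application of the Noetherian inductive hypothesis (now to a proper subvariety of $Y$), this return set is again a finite union of APs plus a density-zero set, which lifts back to contributions of the same form to the original return set. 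On the complementary set of indices the orbit avoids $I_f$, along a subsequence of positive density (if the APs already cover $\NN$ we are done); on this subsequence one applies Theorem~\ref{thm:weak-liminf} — or, to handle subsequences rigorously, invokes the uniform bound of Theorem~\ref{thm:uniform-limsup} with $T$ equal to this positive-density subset — to force $\{n:f(\Phi^n(x))=0\}$ to be density zero along it. Combining all pieces produces the desired decomposition. The delicate point is the combinatorial bookkeeping needed to cohere the APs produced by the various nested inductive applications (possibly after passing to a common refinement of their moduli) into a single finite-AP-plus-density-zero description of the original return set.
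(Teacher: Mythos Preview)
Your overall strategy matches the paper's: reduce to a Zariski-dense orbit in an irreducible ambient variety, choose a rational function $f$ constant on $Y$, and invoke the weak $\liminf$ height gap to force the return set to have density zero. But your treatment of the indeterminacy locus $I_f$ contains a genuine gap.

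The problem is that Theorems~\ref{thm:uniform-limsup} and~\ref{thm:weak-liminf} both require $x\in X_{\Phi,f}(\Qbar)$: the \emph{entire} forward orbit must avoid $I_f$. Passing to a positive-density set $T\subseteq\NN$ of indices along which the orbit happens to miss $I_f$ does not restore this hypothesis; the point $x$ is still not in $X_{\Phi,f}(\Qbar)$, so neither theorem applies. Inspecting the proof of Theorem~\ref{thm:uniform-limsup} confirms this is not merely a formality: the argument works inside the Noetherian space $U=X\setminus\bigcup_n(I_{\Phi^n}\cup I_{f\circ\Phi^n})$ and tracks pairs $(\Phi^i(x),\Phi^j(x))$ under \emph{all} iterates of $(\Phi,\Phi)$, needing them to remain in $U\times U$ throughout, not only for $n\in T$. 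So your final move---``invoke the uniform bound of Theorem~\ref{thm:uniform-limsup} with $T$ equal to this positive-density subset''---is not licensed. There is also a structural issue: Noetherian induction on closed subvarieties, when proving goodness of $Y$, furnishes the hypothesis only for $W\subsetneq Y$, yet you invoke it for $I_f$, which need not lie inside $Y$; the outer induction on $\dim X$ does not help either, since the return set to $I_f$ is computed in the same ambient system $(X,\Phi,x)$.

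The paper sidesteps all of this. After stabilizing the orbit closure $Z$ and showing that $\Phi$ cyclically permutes its irreducible components $Z_1,\dots,Z_r$ with $\cO_{\Phi^r}(x_i)$ dense in each $Z_i$, it chooses $f_i\colon Z_i\to\PP^1$ with $f_i(Y_i)=1$ as a \emph{morphism}, so that $I_{f_i}=\varnothing$ and $x_i\in(Z_i)_{\Phi^r,f_i}(\Qbar)$ automatically; then a single application of Theorem~\ref{thm:weak-liminf} shows $\{n:\Phi^{rn}(x_i)\in Y_i\}$ has density zero. There is no induction on $I_f$ and no nested arithmetic-progression bookkeeping; the only arithmetic progressions in the final answer are the $r$ residue classes coming from the cyclic decomposition.
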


Lastly, we prove a natural generalization of Theorem~\ref{thm:uniform-limsup} for commuting rational self-maps.
Given $m$ commuting rational self-maps $\Phi_1,\ldots,\Phi_m$ of $X$ and $\bn \coloneqq (n_1,\ldots,n_m) \in \NN^m$, we denote by $\Phi^{\bn}$ the composite $\Phi_1^{n_1}\circ \cdots \circ \Phi_d^{n_m}$. Let $X_{\Phi_1,\ldots,\Phi_m}(\Qbar)$ denote the subset of points $x\in X(\Qbar)$ such that for every $\bn\in \NN^m$, the $\bn$-th iterate $\Phi^{\bn}(x)$ avoids the indeterminacy loci of all $\Phi_1, \ldots, \Phi_m$. For any $x \in X_{\Phi_1,\ldots,\Phi_m}(\Qbar)$, we let $\cO_{\Phi_1,\ldots,\Phi_m}(x)$ be the set of points of the form $\Phi^{\bn}(x)$. Similarly, for a rational function $f\colon X \dra \bP^1$, we let $X_{\Phi_1,\ldots,\Phi_m,f}(\Qbar)\subseteq X_{\Phi_1,\ldots,\Phi_m}(\Qbar)$ denote the subset of points $x$ with $I_f\cap\cO_{\Phi_1,\ldots,\Phi_m}(x)=\varnothing$. We endow $\NN^m$ with the $1$-norm $\norm{\bn} \coloneqq n_1+\cdots+n_m$.

\begin{theorem}[$\limsup$ Height Gaps for commuting rational self-maps]
\label{thm:multi-limsup}
Let $X$ be a quasi-projective variety, let $\Phi_1,\ldots,\Phi_m$ be $m$ commuting rational self-maps of $X$, and let $f\colon X\dra \bP^1$ be a non-constant rational function, all defined over $\Qbar$.
Then for any $x\in X_{\Phi_1,\ldots,\Phi_m,f}(\Qbar)$, either $f(\cO_{\Phi_1,\ldots,\Phi_m}(x))$ is finite, or
there exists an $\epsilon>0$ such that for any subset $T \subseteq \NN$ of positive upper asymptotic density, we have
\[
\limsup_{\norm{\bn} \in T} \frac{h(f(\Phi^\bn(x)))}{\log\norm{\bn}} > \epsilon. 
\]
\end{theorem}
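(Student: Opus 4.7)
The plan is to reduce to Theorem~\ref{thm:uniform-limsup} applied to the diagonal map $\Psi \coloneqq \Phi_1 \circ \cdots \circ \Phi_m$, which commutes with every $\Phi_i$. Since the $\Phi_i$ commute pairwise, for each $\bn_0 \in \NN^m$ and $k \in \NN$ we have $\Phi^{\bn_0 + k\mathbf{1}}(x) = \Psi^k(\Phi^{\bn_0}(x))$ and $\norm{\bn_0 + k\mathbf{1}} = \norm{\bn_0} + mk$, where $\mathbf{1} = (1,\ldots,1)$. Accordingly, Theorem~\ref{thm:uniform-limsup} applied to $\Psi$ and $f$ at the starting point $\Phi^{\bn_0}(x)$ yields, whenever $f(\cO_\Psi(\Phi^{\bn_0}(x)))$ is infinite, a constant $\epsilon_{\bn_0} > 0$ such that
\[
\limsup_{k \in T'} \frac{h\bigl(f(\Psi^k(\Phi^{\bn_0}(x)))\bigr)}{\log k} > \epsilon_{\bn_0}
\]
for every $T' \subseteq \NN$ of positive upper density.

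The crux is then the following dichotomy: either $f(\cO_{\Phi_1,\ldots,\Phi_m}(x))$ is finite, or for each residue class $r \in \{0,1,\ldots,m-1\}$ there exists $\bn_0(r) \in \NN^m$ with $\norm{\bn_0(r)} \equiv r \pmod m$ and $f(\cO_\Psi(\Phi^{\bn_0(r)}(x)))$ infinite. Granting this, set $\epsilon \coloneqq \tfrac{1}{2m}\min_{r} \epsilon_{\bn_0(r)}$. Given any $T \subseteq \NN$ of positive upper density $d > 0$, subadditivity of upper density produces some residue $s \in \{0,\ldots,m-1\}$ for which $T \cap (m\NN + s)$ has upper density at least $d/m$; choose $r$ so that $\norm{\bn_0(r)} \equiv s \pmod m$. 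Rescaling gives $T' \coloneqq \{k \in \NN : mk + \norm{\bn_0(r)} \in T\}$ of positive upper density. Applying the single-variable estimate along $T'$ furnishes infinitely many $k \in T'$ such that $\bn \coloneqq \bn_0(r) + k\mathbf{1}$ satisfies $\norm{\bn} \in T$ and $h(f(\Phi^\bn(x)))/\log \norm{\bn} > \epsilon$, using $\log k = \log \norm{\bn} + O(1)$.

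The main obstacle is the dichotomy. A priori, $f(\cO_{\Phi_1,\ldots,\Phi_m}(x))$ can be infinite even when every individual diagonal line $\{\Phi^{\bn_0+k\mathbf{1}}(x) : k \geq 0\}$ contributes only finitely many $f$-values, with the infinitude spread across countably many distinct starting points $\bn_0$. I would resolve this by induction on $m$: the base case $m=1$ is vacuous, as $\Psi = \Phi_1$ and there is a unique residue class. For the inductive step, if every $\bn_0$ in some fixed residue class were to yield finite $f(\cO_\Psi(\Phi^{\bn_0}(x)))$, then the image of the corresponding arithmetic-progression slice of the orbit would lie in a countable union of proper subvarieties of $X$ defined by fibers of $f$; a Noetherian descent combined with the inductive hypothesis applied to $\Phi_1,\ldots,\Phi_{m-1}$ restricted to each such subvariety should then force $f(\cO_{\Phi_1,\ldots,\Phi_m}(x))$ itself to be finite, yielding the required contradiction.
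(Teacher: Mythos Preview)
Your reduction to the diagonal map $\Psi = \Phi_1 \circ \cdots \circ \Phi_m$ founders because the dichotomy you need is simply false. Take $m = 2$, $X = \AA^1$, $\Phi_1(y) = y+1$, $\Phi_2(y) = y-1$, $f(y) = y$, and $x = 0$. The $\Phi_i$ commute and $\Psi = \Phi_1 \circ \Phi_2 = \mathrm{id}$, so for \emph{every} $\bn_0$ the diagonal orbit $\cO_\Psi(\Phi^{\bn_0}(x))$ is a single point and $f(\cO_\Psi(\Phi^{\bn_0}(x)))$ is finite. Yet $f(\cO_{\Phi_1,\Phi_2}(x)) = \{n_1 - n_2 : n_1, n_2 \in \NN\} = \ZZ$ is infinite. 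The theorem does hold here---for $n\in T$ take $\bn = (n,0)$, so $h(f(\Phi^\bn(x)))/\log\norm{\bn} = 1$---but your argument cannot see this, since no starting point yields an infinite diagonal $f$-orbit, and hence no $\epsilon_{\bn_0}$ is ever produced.

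Your proposed inductive repair cannot rescue the dichotomy either: in this example the fibers of $f$ are single points, none preserved by $\Phi_1$, so there is no meaningful restriction of $\Phi_1$ to a fiber on which to invoke the $m=1$ case, and the ``countable union of fibers'' is all of $\ZZ$, so no Noetherian descent terminates. More generally, whenever the $\Phi_i$ satisfy a relation making $\Psi$ periodic on the orbit closure, the diagonal collapses too much information. The paper instead inducts on $m$ by dropping one map: it asks whether $f(\cO_{\Phi_1,\ldots,\Phi_{m-1}}(x))$ is already infinite (if so, done by induction), and if not, it uses that $f$ is constant on each component $Z_i$ of the Zariski closure of the $(m{-}1)$-orbit and then analyzes whether $f$ remains constant on each $\Phi_m^j(Z_i)$, reducing either to the univariate case for $\Phi_m$ or back to the $(m{-}1)$-case at a shifted basepoint. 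This keeps control of one map at a time rather than composing them all into $\Psi$.
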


Theorem~\ref{thm:multi-limsup} is shown by induction, where the key step is the base case $m=1$; note that this base case is precisely Theorem~\ref{thm:uniform-limsup}. In a similar manner to the proof of Theorem~\ref{thm:weak-liminf}, one can deduce a weak $\liminf$ height gap result for commuting rational maps. In Example~\ref{ex:multi-density}, we show that one cannot expect a version of Theorem \ref{thm:multi-limsup} to hold if the $\limsup$ over $\norm{\bn}\in T$ is replaced by a $\limsup$ over $\bn \in \bT$.

In \cite{Lipshitz89}, Lipshitz introduced and studied multivariate $D$-finite power series (see Definition~\ref{def:multi-D-finite}), which extended Stanley's pioneering work \cite{Stanley80} on univariate $D$-finite power series.
Recently, the first author, Nguyen, and Zannier proved a height gap result for the coefficients of multivariate $D$-finite power series; see \cite[Theorem~1.3(c)]{BNZ}. The reader may be curious to know whether it is possible to deduce their result from Theorem~\ref{thm:multi-limsup}, analogously to how we deduced the univariate $D$-finiteness result Theorem~\ref{thm:D-finite-gap} from Theorem~\ref{thm:limsup}. This appears to be a subtle issue: our proof of Theorem~\ref{thm:D-finite-gap} relies on the fact that for sufficiently large $n$, the coefficients of a univariate $D$-finite power series are of the form $f(\Phi^n(c))$ for certain choices of $X$, $\Phi$, $f$, and $c$; see \cite[Section~3.2.1]{DML-book}. In contrast, we construct in Example~\ref{ex:multi-coeff} a $D$-finite power series in two variables (in fact a rational function) whose coefficients never arise as $f(\Phi_1^{n_1}\circ\Phi_2^{n_2}(c))$ for any choices of $X$, $\Phi_1$, $\Phi_2$, $f$, and $c$.


\section*{Acknowledgments}
We thank Ken Davidson and Vern Paulsen for helpful conversations.


\section{The $\limsup$ Height Gap Conjecture: Proof of Theorem~\ref{thm:limsup}}
\label{sec:limsup-gap}

We start by stating Schanuel's Theorem, which plays a central role in the proofs of Theorems \ref{thm:limsup} and \ref{thm:uniform-limsup}. It can be regarded as a quantitative version of Northcott's theorem \cite{Northcott49}. Schanuel's Theorem has a conjectural extension to Fano varieties, known as Manin's conjecture, which has attracted a lot of attention recently (see the survey \cite{LT19} and references therein).
\begin{theorem}[{Schanuel \cite{Schanuel79}, cf.~\cite[11.10.5]{BG06}}]
\label{thm:Schanuel}
Let $\KK$ be a number field of degree $d$ and let $h(\cdot)$ denote the absolute logarithmic Weil height on $\bP^n_{\KK}$. Then we have
\[
\lim_{B\to \infty} \frac{\#\{P\in \bP^n_\KK : h(P)\le \log B \}}{B^{d(n+1)}} = C_{n,\KK} > 0,
\]
where the positive constant $C_{n,\KK}$ depends only on $n$ and $\KK$.
\end{theorem}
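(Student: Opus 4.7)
The approach is Schanuel's original argument via the geometry of numbers. First, fix representatives $\fa_1,\ldots,\fa_{h_\KK}$ of the ideal class group of $\KK$. Given $P\in\bP^n(\KK)$, choose coordinates in $\O_\KK^{n+1}\setminus\{0\}$; the ideal class generated by them depends only on $P$, so after rescaling by an element of $\KK^*$ I may assume coordinates $(x_0,\ldots,x_n)\in\fa_j^{n+1}$ with $(x_0,\ldots,x_n)=\fa_j$ exactly, for a unique $j\in\{1,\ldots,h_\KK\}$. This tuple is then unique up to the diagonal action of $\O_\KK^*$. With this normalization, the product formula rewrites the height bound $h(P)\le\log B$ as the purely archimedean inequality $\prod_{v\mid\infty}\max_i\|x_i\|_v\le B^d\cdot N(\fa_j)$.

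Next, I would use Dirichlet's unit theorem to select a fundamental domain $\Omega\subset(\KK\otimes\RR)^{n+1}$ for the action of $\O_\KK^*/\mu_\KK$; the regulator $R_\KK$ appears here as the log-coordinate volume of a fundamental parallelepiped, and $w_\KK=|\mu_\KK|$ will ultimately account for the roots of unity. The image of $\fa_j^{n+1}$ is then a full lattice of covolume $N(\fa_j)^{n+1}|d_\KK|^{(n+1)/2}$. To enforce the primitivity condition $(x_0,\ldots,x_n)=\fa_j$ (as opposed to mere containment), I would apply Möbius inversion over the semigroup of integral ideals, which produces the factor $\zeta_\KK(n+1)^{-1}$ in the final constant.

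I would then apply a Lipschitz-boundary lattice-point counting theorem (Davenport's principle) to the dilated region
\[
R_B=\Bigl\{y\in\Omega:\textstyle\prod_{v\mid\infty}\max_i\|y_i\|_v\le B^d\,N(\fa_j)\Bigr\},
\]
obtaining $\#(R_B\cap\fa_j^{n+1})=\mathrm{vol}(R_B)/(\text{covolume})+O(B^{d(n+1)-1})$. Homogeneity gives $\mathrm{vol}(R_B)=V_j\cdot B^{d(n+1)}$, and $V_j$ splits into archimedean factors: real places contribute $2^{r_1(n+1)}$, complex places $(2\pi)^{r_2(n+1)}$, and the log-simplex cut out by the product constraint contributes $(n+1)^{r_1+r_2-1}R_\KK$ up to an explicit combinatorial constant. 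Summing over $j=1,\ldots,h_\KK$, dividing by $w_\KK$, and executing the Möbius inversion would yield
\[
C_{n,\KK}=\frac{h_\KK R_\KK}{w_\KK\,\zeta_\KK(n+1)}\left(\frac{2^{r_1}(2\pi)^{r_2}}{\sqrt{|d_\KK|}}\right)^{\!n+1}(n+1)^{r_1+r_2-1},
\]
which is manifestly positive.

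The main obstacle will be achieving \emph{uniform} control of the Davenport error through the Möbius inversion: naively summing the $O(B^{d(n+1)-1})$ error over all integral ideals $\fb$ does not converge. To address this, I would show that the Lipschitz constant of the boundary of the scaled region depends at most polynomially on $N(\fb)^{1/d}$, split the Möbius sum at a threshold $N(\fb)\le B^\delta$ for a small $\delta>0$, and bound the tail using the crude analytic estimate $\sum_{N(\fb)>X}N(\fb)^{-(n+1)}=O(X^{-n})$. Combining the head and tail contributions should produce a total error of size $o(B^{d(n+1)})$, thereby establishing the limit and completing the proof.
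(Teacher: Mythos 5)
Theorem~\ref{thm:Schanuel} is not proved in the paper: it is Schanuel's classical counting theorem, quoted from \cite{Schanuel79} and \cite[11.10.5]{BG06} and used as a black box. In fact, the arguments in Sections~\ref{sec:limsup-gap} and~\ref{sec:uniform-limsup} only invoke the (considerably easier) upper bound $\#\{P\in\bP^n_\KK : h(P)\le\log B\}=O(B^{d(n+1)})$, not the full asymptotic with explicit positive leading constant, so there is no internal proof to compare yours against.

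Your sketch is a correct reconstruction of Schanuel's original geometry-of-numbers argument and lands on the right constant. The chain --- ideal-class normalization of projective coordinates, product-formula reduction to a single archimedean box constraint, Dirichlet fundamental domain for $\O_\KK^*/\mu_\KK$, Davenport's Lipschitz lattice-point lemma, and M\"obius inversion over integral ideals to enforce primitivity, producing $\zeta_\KK(n+1)^{-1}$ --- is the standard one and sound. You have also correctly flagged the one genuinely delicate point, namely the uniformity of the error term across the M\"obius sum, and your head/tail split at $N(\fb)\le B^\delta$ combined with the tail estimate $\sum_{N(\fb)>X}N(\fb)^{-(n+1)}=O(X^{-n})$ is an adequate fix. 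The cleaner formulation is that the Davenport error for the count in the sublattice $\fb\fa_j^{n+1}$ decays in $N(\fb)$ because its covolume grows like $N(\fb)^{n+1}$; your version via a rescaled region is equivalent to this by homogeneity, but stating it in terms of the sublattice makes the uniform bound more transparent. The one place to double-check details is the covolume of $\fa_j^{n+1}$ under the Minkowski embedding, where convention-dependent factors of $\sqrt 2$ (or $2^{-r_2}$) at complex places shift the precise formula for $C_{n,\KK}$ without affecting the structure of the argument.
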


We shall prove Theorem~\ref{thm:limsup} via an application of Schanuel's Theorem \ref{thm:Schanuel} and the following lemma. Recall that a topological space $U$ is called {\it Noetherian} if the descending chain condition holds for closed subsets of $U$, i.e.~for every chain of closed sets $Z_1\supset Z_2\supset\dots$, there is some $m\geq1$ for which $Z_m=Z_n$ for all $n\geq m$.

\begin{lemma}
\label{l:truncated-orbit}
Let $X$ be a quasi-projective variety, let $\Phi\colon X\dra X$ be a rational self-map, and let $f\colon X\dra \bP^1$ be a rational function, all defined over $\Qbar$.
Then there exists a constant $\ell\in\NN$ with the following property: if $x,y\in X_{\Phi,f}(\Qbar)$ and $f(\Phi^n(x))=f(\Phi^n(y))$ for $0\leq n\leq \ell$, then $f(\Phi^n(x))=f(\Phi^n(y))$ for all $n\geq0$.
\end{lemma}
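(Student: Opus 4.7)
The plan is to establish the lemma via a Noetherian descending chain argument on $X \times X$. First, I would introduce for each $k \in \NN$ the Zariski open locus $U_k \subseteq X$ on which the rational map $f \circ \Phi^k \colon X \dra \bP^1$ is defined, and then set
\[
D_k \coloneqq \{(u,v) \in U_k \times U_k : (f\circ\Phi^k)(u) = (f\circ\Phi^k)(v)\},
\]
which is a closed subset of $U_k \times U_k$ (being the preimage of the diagonal $\Delta_{\bP^1} \subset \bP^1 \times \bP^1$ under $(f\circ\Phi^k) \times (f\circ\Phi^k)$). Let $\overline{D_k}$ denote the Zariski closure of $D_k$ in $X \times X$, and define
\[
Z_n \coloneqq \bigcap_{k=0}^n \overline{D_k} \subseteq X \times X.
\]

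Next, since $X \times X$ is a Noetherian topological space (as $X$ is quasi-projective), the descending chain $Z_0 \supseteq Z_1 \supseteq \cdots$ stabilizes at some index $\ell \in \NN$, i.e., $Z_\ell = Z_n$ for all $n \geq \ell$. I would then claim this $\ell$ satisfies the conclusion of the lemma. Indeed, if $x, y \in X_{\Phi,f}(\Qbar)$ satisfy $f(\Phi^n(x)) = f(\Phi^n(y))$ for $0 \leq n \leq \ell$, then $(x,y) \in D_n$ for each such $n$, so $(x,y) \in Z_\ell$. For any $n > \ell$, stabilization gives $(x,y) \in Z_n \subseteq \overline{D_n}$.

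Finally, the key step, and the one I expect to require the most care, is to conclude $(x,y) \in D_n$ itself, not merely in its closure. Here I would invoke the hypothesis $x, y \in X_{\Phi,f}(\Qbar)$: every iterate $\Phi^k$ is defined at $x$ and $y$ and maps them outside $I_f$, so in particular $x, y \in U_n$ for every $n$. Since $D_n$ is closed in the open subset $U_n \times U_n \subseteq X \times X$, the standard fact $\overline{D_n} \cap (U_n \times U_n) = D_n$ yields $(x,y) \in D_n$, i.e., $f(\Phi^n(x)) = f(\Phi^n(y))$. The main obstacle is thus ensuring that passage to Zariski closure in $X \times X$ does not contaminate the argument at the specific points $(x,y)$; this is exactly what the hypothesis that the forward orbits avoid $I_\Phi$ and $I_f$ is designed to guarantee.
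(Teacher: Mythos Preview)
Your proof is correct and follows essentially the same Noetherian descending chain argument as the paper. The only difference is cosmetic: rather than taking closures in $X\times X$ and then intersecting back with the open loci $U_n\times U_n$, the paper works directly in the subspace $U\times U$ where $U=\bigcap_n U_n$ (endowed with the subspace topology, hence still Noetherian), so that the sets $Z_n$ are already closed there and no closure-then-restriction step is needed.
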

\begin{proof}
Let $U_n = X \setminus \bigcup_{j \leq n} (I_{\Phi^j}\cup I_{f\circ\Phi^j})$ and $U = \bigcap_n U_n$. By construction, the $\Qbar$-points of $U$ are precisely those on which $\Phi^n$ and $f\circ\Phi^n$ are well-defined for all $n\geq0$, i.e.~$U(\Qbar)=X_{\Phi,f}(\Qbar)$. We endow $U$ with the subspace topology inherited from $X$ thereby making it a Noetherian topological space. Since
\[
\begin{tikzcd}
U\times U \arrow[hook]{r} & U_n\times U_n\arrow{r}{(f\circ\Phi^n, \, f\circ\Phi^n)} &[2.5em] \PP^1\times\PP^1
\end{tikzcd}
\]
is continuous and the image of the diagonal map $\PP^1\to\PP^1\times\PP^1$ is closed, we see that
\[
Z_n \coloneqq \{(x,y)\in U\times U : f(\Phi^i(x))=f(\Phi^i(y))\textrm{\ for\ }i\leq n\}
\]
is a closed subset of $U\times U$. As $U\times U$ is Noetherian, there exists an $\ell\in \NN$ such that $Z_n=Z_\ell$ for all $n\geq\ell$.
\end{proof}

\begin{proof}[Proof of Theorem \ref{thm:limsup}]
Let $x\in X_{\Phi,f}(\Qbar)$. 
Without loss of generality, we may assume that $X$, $\Phi$, $f$, and $x$ are defined over a fixed number field $\KK$. Suppose that
\[
\limsup_{n\to\infty} \frac{h(f(\Phi^n(x)))}{\log n}=0,
\]
i.e., $h(f(\Phi^n(x)))=o(\log n)$. We will show that $f(\mathcal{O}_\Phi(x))$ is finite.

Letting $\ell$ be as in Lemma \ref{l:truncated-orbit}, we define
\[
y_i \coloneqq (f(\Phi^{i}(x)), f(\Phi^{i+1}(x)), \dots, f(\Phi^{i+\ell}(x)))\in(\PP^1)^{\ell+1}(\KK)
\]
for $i\geq0$, and let $S = \{y_i : i\geq0\}$.
Via the Segre embedding, we may view $S\subseteq\PP^{2^{\ell+1}-1}(\KK)$.
Then
\[
h(y_i)=\sum_{j=0}^\ell h(f(\Phi^{i+j}(x)))=o(\log i).
\]

Next, choose $0<\epsilon<([\KK:\QQ]2^{\ell+1})^{-1}$. Then there exists $N_0\in\ZZ^+$ such that for all $i\geq N_0$, we have $h(y_i)<\epsilon\log i$. So, for all $n\geq N_0$,
\[
\#\{y_{N_0},y_{N_0+1},\dots,y_n\} \leq \# \big\{ z\in\PP^{2^{\ell+1}-1}(\KK) : h(z) \leq \log n^\epsilon \big\} = O(n^{\epsilon[\KK:\QQ]2^{\ell+1}}),
\]
where the equality comes from applying Schanuel's Theorem \ref{thm:Schanuel}.
Choosing $n$ sufficiently large, we find
\[
\#\{y_{N_0},y_{N_0+1},\ldots,y_n\}<n-N_0.
\]
In particular, there exist $i<j$ for which $y_i=y_j$. Thus, $f(\Phi^n(\Phi^i(x)))=f(\Phi^n(\Phi^j(x)))$ for all $0\leq n\leq \ell$, and so Lemma \ref{l:truncated-orbit} implies $f(\Phi^{n+i}(x))=f(\Phi^{n+j}(x))$ for all $n\geq0$. It follows that $f(\Phi^n(x))$ is eventually periodic with period dividing $j-i$. Hence, $f(\mathcal{O}_\Phi(x))$ is finite.
\end{proof}


\section{Uniform $\limsup$ Height Gap Result: Proof of Theorem~\ref{thm:uniform-limsup}}
\label{sec:uniform-limsup}

The main goal of this section is to prove Theorem~\ref{thm:uniform-limsup} which is the strengthening of Theorem~\ref{thm:limsup}.

\subsection{Preliminary results on sets of positive density}

\begin{definition}
\label{def:density}
Let $A$ be a subset of $\ZZ^+$.
The {\it upper asymptotic (or natural) density} $\den(A)$ of $A$ is defined by
\[
\den(A) \coloneqq \limsup_{m\to \infty} \frac{|A \cap [1, m]|}{m}.
\]
We frequently refer to $\den(A)$ simply as the {\it density} of $A$.
\end{definition}

\begin{remark}
\label{rmk:density}
It is easy to see that the density $\den(A)$ of any $A\subseteq \ZZ^+$ is {\it right translation invariant}, i.e., $\den(A+i) = \den(A)$ for any $i\in \NN$, where $A+i \coloneqq \{ a+i : a\in A\}$. Consequently, can extend the definition of density to any $A\subseteq\ZZ$ that is bounded from below.
\end{remark}

\begin{remark}
\label{rmk:extraction}
Let $T \subseteq \NN$ have positive density and let $L\ge 1$. By the subadditivity of natural density, there exists some $a\in \{0,1,\ldots ,L-1\}$ such that $T\cap (a+L\mathbb{N})$ has positive density.
\end{remark}

\begin{definition}
\label{def:shift-set}
Given $T \subseteq \NN$, the \emph{shift set} of $T$ is defined to be
\[
\Sigma(T)=
\{ i \in \NN : \den(T\cap (T+i))>0 \}.
\]
\end{definition}

Our goal in this subsection is to prove that if $T$ has positive density, then $\Sigma(T)$ does as well. We prove this after a preliminary lemma.

\begin{lemma}
\label{l:positive-density-shifts}
Let $T\subseteq \NN$ and $N \in \ZZ^+$ satisfying $\den(T)>\frac{1}{N}$.
Then for any finite subset $F\subseteq\NN$ with $|F|\ge N$, there exist $j,k\in F$ with $j>k$ such that $\den((T+(j-k))\cap T)>0$.
\end{lemma}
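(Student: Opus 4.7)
The plan is a second-moment and pigeonhole argument applied to the shifted sets $T+j$, $j\in F$. By Remark~\ref{rmk:density} each $T+j$ has upper density $\den(T)>1/N$, and the intuition is that packing $|F|\ge N$ sets of density exceeding $1/N$ into the interval $[1,m]$ must force many pairwise overlaps, at least one of which must itself be of positive density.

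To carry this out I would introduce the counting function $c_m(x)\coloneqq\#\{j\in F:x\in T+j\}$ on $x\in[1,m]$, together with its sum
\[
S_m\;\coloneqq\;\sum_{x=1}^m c_m(x)\;=\;\sum_{j\in F}|(T+j)\cap[1,m]|.
\]
Since each $T+j$ is a bounded shift of $T$, one has $|(T+j)\cap[1,m]|=|T\cap[1,m]|+O_F(1)$, hence $S_m=|F|\,|T\cap[1,m]|+O_F(1)$. Choosing a subsequence $m_k$ realizing $\den(T)$ gives $S_{m_k}/m_k\to L\coloneqq|F|\,\den(T)>1$. The Cauchy--Schwarz bound $\sum_x c_m(x)^2\ge S_m^2/m$ then yields
\[
\sum_{\{j\ne k\}\subset F}|(T+j)\cap(T+k)\cap[1,m]|\;=\;\sum_{x=1}^m\binom{c_m(x)}{2}\;\ge\;\tfrac{1}{2}\!\left(\frac{S_m^2}{m}-S_m\right),
\]
whose right-hand side, divided by $m_k$, tends to $L(L-1)/2>0$.

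Because $F$ is finite, I can pass to a further subsequence along which each of the $\binom{|F|}{2}$ summands on the left converges, and then apply pigeonhole: at least one pair $\{j,k\}\subset F$ with $j>k$ must satisfy $\den\bigl((T+j)\cap(T+k)\bigr)>0$. A direct set-theoretic calculation shows $(T+j)\cap(T+k)=j+\bigl(T\cap(T-(j-k))\bigr)$ is a translate of $(T+(j-k))\cap T=(j-k)+\bigl(T\cap(T-(j-k))\bigr)$, so by translation invariance of upper density (Remark~\ref{rmk:density}) this gives $\den\bigl((T+(j-k))\cap T\bigr)>0$, as required. The one subtle point is the interchange of $\limsup$ with a finite sum — density is a $\limsup$ rather than a genuine limit — and this is why extracting a common convergent subsequence before invoking pigeonhole is essential.
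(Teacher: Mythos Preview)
Your proof is correct, and it takes a genuinely different (though closely related) route from the paper's argument. Both proofs exploit the same underlying observation---that $|F|\ge N$ translates of $T$ carry total mass exceeding $1$ along a realizing subsequence, so there must be nontrivial overlap---but they package this differently. The paper argues by contradiction via inclusion--exclusion: assuming every pairwise intersection has density zero, along the realizing sequence one gets $|T_i\cap I_n|>(\tfrac{1}{N}+\epsilon)|I_n|$ for each $i\in F$ while each $|T_j\cap T_k\cap I_n|<\tfrac{2\epsilon}{|F|}|I_n|$, and a Bonferroni bound then forces $|\bigcup_i T_i\cap I_n|>(1+\epsilon)|I_n|$, a contradiction. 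Your second-moment approach is instead constructive: Cauchy--Schwarz on the multiplicity function $c_m$ produces a quantitative lower bound $L(L-1)/2$ on the total pairwise overlap, and then compactness plus pigeonhole extracts a specific pair whose overlap has positive subsequential limit, hence positive upper density. Your method yields slightly more information (a lower bound on the best pair's density along a subsequence), at the cost of the extra subsequence-extraction step you correctly flag as necessary to handle the $\limsup$. Both are standard and equally short; neither has a clear advantage for the purposes of the paper.
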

\begin{proof}
For ease of notation, we let $T_i=T+i$ for any $i\in \NN$. By definition of the density function, there is a sequence $0<m_1<m_2<\cdots<m_n<\cdots$ and intervals $I_n = [0, m_n] \subset \NN$ such that
\[
\lim_{n\to \infty} \frac{|T \cap I_n|}{|I_n|} = \den(T).
\]
For each $i\in\NN$, we have $|T\cap I_n| - i \le |T\cap (I_n-i)| \le |T\cap I_n|$, and so $\displaystyle \lim_{n\to \infty} \frac{|T \cap (I_n-i)|}{|I_n|} = \den(T)$. In particular, this holds for each $i\in F$.

Fix an $\epsilon>0$ with $\frac{1}{N}+\epsilon < \den(T)$. Then for $n$ sufficiently large,
\[
\frac{|T_i \cap I_n|}{|I_n|}=\frac{|T \cap (I_n-i)|}{|I_n|} > \frac{1}{N}+\epsilon
\]
for all $i\in F$. 
Now, suppose to the contrary that $\den(T_j \cap T_k) = \den(T_{j-k} \cap T) = 0$ for all distinct $j, k\in F$ with $j>k$. It follows that for $n$ sufficiently large,
\[
|T_j \cap T_k \cap I_n| < \frac{2 |I_n|}{|F|} \epsilon
\]
for all distinct $j, k\in F$.
Clearly,
\[
|I_n| \ge \big| I_n \cap \bigcup_{i\in F} T_i \big| = \big| \bigcup_{i\in F} (T_i \cap I_n) \big|.
\]
However, the inclusion-exclusion principle asserts that
\begin{align*}
\big| \bigcup_{i\in F} (T_i \cap I_n) \big| &\ge \sum_{i\in F} |T_i \cap I_n| - \sum_{\substack{{j, k\in F} \\ j>k}} |T_j \cap T_k \cap I_n| \\
&> |F| \big(\frac{1}{N}+\epsilon \big) |I_n| - {|F| \choose 2} \frac{2 |I_n|}{|F|} \epsilon \\
&= \big(\frac{|F|}{N}+\epsilon \big) |I_n| \ge (1+\epsilon)|I_n|,
\end{align*}
which yields a contradiction and hence Lemma~\ref{l:positive-density-shifts} follows.
\end{proof}

The following result is strengthening of \cite[Lemma~2.1]{DML-noetherian}.

\begin{proposition}
\label{prop:positive-density-shifts}
If $T\subseteq\NN$ satisfies $\den(T)>0$, then $\den(\Sigma(T))>0$.
\end{proposition}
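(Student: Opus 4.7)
The plan is to argue by contradiction, using Lemma~\ref{l:positive-density-shifts} as the key input. Assume $\den(\Sigma(T)) = 0$; since $\den(T) > 0$, fix $N \in \ZZ^+$ with $\den(T) > 1/N$. The goal is to construct a finite set $F \subseteq \NN$ with $|F| = N$ whose positive differences $j - k$ (for $j > k$ in $F$) all avoid $\Sigma(T)$. Such a set directly contradicts the conclusion of Lemma~\ref{l:positive-density-shifts} applied to $T$ and $N$.

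The main step is a greedy construction in the spirit of Behrend-type set building. Fix a small parameter, for concreteness $\epsilon = 1/N^2$. Since $\den(\Sigma(T)) = 0$, for all sufficiently large $M$ we have $|\Sigma(T) \cap [1,M]| < \epsilon M$; choose such an $M$ that is also at least $2N$. Build $F = \{r_1 < r_2 < \cdots < r_N\} \subseteq [1, M]$ inductively by setting $r_1 = 1$ and letting $r_{k+1}$ be the smallest integer in $(r_k, M]$ with $r_{k+1} - r_i \notin \Sigma(T)$ for every $i \le k$. At the $k$th step the set of forbidden values in $(r_k, M]$ is contained in $\bigcup_{i \le k} (r_i + \Sigma(T))$ and hence has size at most $k\,|\Sigma(T) \cap [1,M]| < k\epsilon M$, so the smallest admissible $r_{k+1}$ satisfies $r_{k+1} \le r_k + k\epsilon M + 1$. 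A telescoping estimate yields $r_N \le N + \binom{N}{2}\epsilon M$, which is less than $M$ by our choice of $\epsilon$ and $M$, so the construction succeeds and produces the desired $F$.

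Finally, applying Lemma~\ref{l:positive-density-shifts} to this $F$ (which has $|F| = N$ and hence satisfies $|F| \ge N$) produces $j > k$ in $F$ with $j - k \in \Sigma(T)$, contradicting the construction. This forces $\den(\Sigma(T)) > 0$.

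The main obstacle is ensuring the greedy process does not outrun the ambient interval $[1, M]$; this is resolved by taking $\epsilon$ small enough that the cumulative budget of "skips" $\binom{N}{2}\epsilon M$ is bounded by $M/2$, which only requires $\epsilon < 1/\binom{N}{2}$ and $M \ge 2N$. Everything else is a clean counting estimate that follows from the hypothesis $\den(\Sigma(T)) = 0$.
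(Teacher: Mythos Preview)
Your argument is correct. The greedy construction goes through: the key estimate $r_{k+1}\le r_k + k\epsilon M + 1$ follows because among the integers $r_k+1,\ldots,r_k+\lfloor k\epsilon M\rfloor+1$ (all of which lie in $[1,M]$ by the telescoping bound and the choice $M\ge 2N$, $\epsilon=1/N^2$) there are more than $k\epsilon M$ candidates and fewer than $k\epsilon M$ are forbidden. The telescoping $r_N\le N+\binom{N}{2}\epsilon M<M$ then closes the loop, and Lemma~\ref{l:positive-density-shifts} gives the contradiction. One small expository point: the existence of an admissible $r_{k+1}$ in $(r_k,M]$ and the bound $r_{k+1}\le r_k+k\epsilon M+1$ should really be verified inductively together with the running estimate $r_k\le k+\binom{k}{2}\epsilon M$, since the forbidden-count bound uses that the candidate interval sits inside $[1,M]$; this is implicit in what you wrote but worth making explicit.

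Your route is genuinely different from the paper's. The paper does not argue by contradiction or build $F$ greedily; instead it takes a \emph{maximal} finite set $F_{\max}$ whose positive differences all miss $\Sigma(T)$ (such sets have size $<N$ by Lemma~\ref{l:positive-density-shifts}), lets $M=\max F_{\max}$, and observes that for every $n>M$ the set $F_{\max}\cup\{n\}$ is no longer in the family, so some $n-k_n\in\Sigma(T)$ with $k_n\in F_{\max}$. This shows $\Sigma(T)$ meets every interval of length $M$, giving the explicit bound $\den(\Sigma(T))\ge 1/M$. So the paper's argument is shorter, avoids the $\epsilon$--$M$ bookkeeping, and actually proves the stronger conclusion that $\Sigma(T)$ is syndetic. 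Your approach, on the other hand, is a clean direct application of the density-zero hypothesis via a pigeonhole/greedy count, which some readers may find more transparent as a pure contradiction argument; it just yields positivity of $\den(\Sigma(T))$ without the quantitative syndeticity.
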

\begin{proof}
Choose a positive integer $N$ satisfying $\den(T)>\frac{1}{N}$, and let $T_i$ denote $T+i$ for any $i \in \NN$. 
If $\Sigma(T)=\NN$, then there is nothing to prove.
So we may suppose there is some $i\in \NN$ such that $\den(T \cap T_i)=0$.
Consider the set $\mathcal{S}$ of those finite subsets $F\subseteq \NN$ such that $\den(T_{j-k}\cap T)=0$ for all $j,k\in F$ with $j>k$.
Clearly, $\mathcal{S} \neq \emptyset$ as $\{1, i+1\} \in \mathcal{S}$.
Moreover, by Lemma~\ref{l:positive-density-shifts}, we know $|F|<N$ for any $F\in \mathcal{S}$.

Let $\emptyset \neq F_{\max} \subseteq \NN$ be any maximal element of $\mathcal{S}$ (with respect to inclusion of sets),
and let $M$ be the maximum element of $F_{\max}$.
Then by our definition of $F_{\max}$, for any integer $n>M$, there exists some $k_n\in F_{\max}$ satisfying
\[
\den(T_{n-k_n}\cap T)>0, \text{ i.e., } n-k_n\in \Sigma(T).
\]
Since $0\leq k_n\leq M$, we see $n-M\leq n-k_n \le n$.
In particular, for every $c\geq2$, we have
\[
iM-k_{cM}\in \Sigma(T) \cap [(c-1)M, cM].
\]
It thus follows from the definition of density that
$\displaystyle \den(\Sigma(T)) \geq \lim_{c\to \infty} \frac{c-1}{cM} = \frac{1}{M}$.
\end{proof}

\begin{remark}
\label{rmk:R-of-T}
Using a similar argument, one can obtain an analogue of Proposition~\ref{prop:positive-density-shifts} where one replaces $\den$ by upper Banach density.
\end{remark}

\subsection{Stable non-periodic dimension}

Given a Noetherian topological space $U$ of finite Krull dimension and continuous map $\Phi\colon U\to U$, a subset $Y\subseteq U$ is {\it periodic with respect to $\Phi$} if $\Phi^n(Y) \subseteq Y$ for some positive integer $n$; we frequently say $Y$ is $\Phi$-{\it periodic} or simply {\it periodic} if $\Phi$ is understood from context.
It is well known that 
If $Z\subseteq U$ is a closed subset, let $Z_1, \ldots, Z_r$ denote its irreducible components and consider the set
\[
\mathcal{S} = \left\{ \bigcup_{i\in I} Z_i : I\subseteq \{1,\dots, r\} \right\}.
\]
Notice that if $Y_1, Y_2\in \mathcal{S}$ are periodic with respect to $\Phi$, then so is $Y_1\cup Y_2$. In particular, there is a unique maximal $\Phi$-periodic element of $\mathcal{S}$ which we denote by $P_{\Phi}(Z)$. Notice that $P_{\Phi}(Z)$ contains all periodic irreducible components of $Z$, but it is possible for $P_{\Phi}(Z)$ to also contain some non-periodic irreducible components of $Z$ as well. We let $N_{\Phi}(Z)$ denote the union of the irreducible components of $Z$ that are not contained in $P_{\Phi}(Z)$.

For each $Z_i\subseteq N_{\Phi}(Z)$, the sequence $\dim \ol{\Phi^n(Z_i)}$ is weakly decreasing and converges to some $d_i\in \NN$ since $U$ has finite Krull dimension. Let
\[
\nu_i \coloneqq (d_i,\dim Z_i).
\]
We put a strict total order $\prec$ on $(\mathbb{N}\cup \{-\infty\})\times(\mathbb{N}\cup \{-\infty\})$ by declaring $(a,b)\prec (a',b')$ if $a<a'$, or if $a=a'$ and $b<b'$. The relations $\preceq$, $\succ$, and $\succeq$ are then defined in the natural way.

\begin{definition}
\label{def:ess-non-per-dim}
With notation as above, we define the {\it stable non-periodic dimension} $\nu(Z)$ of $Z$ to be the maximum $\nu_i$ with respect to $\prec$.
If $N_{\Phi}(Z)$ is empty, we define $\nu(Z) = (-\infty,-\infty)$.
\end{definition}

The following is the main technical result of this subsection.

\begin{proposition}
\label{prop:strictly-decreasing}
Let $U$ be a Noetherian topological space of finite Krull dimension and $\Phi\colon U\to U$ a continuous map. Suppose that $T\subseteq\NN$ has positive density and $Z\subseteq U$ is a closed subset with $N_{\Phi}(Z)\neq\varnothing$. Then there exist infinitely many $j\in \Sigma(T)$ with $\nu(Z) \succ \nu(Z\cap \Phi^{-j}(Z))$.
\end{proposition}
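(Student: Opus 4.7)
The strategy is to show that the set $B \coloneqq \{j \in \NN : \nu(W_j) \succeq \nu(Z)\}$, where $W_j \coloneqq Z \cap \Phi^{-j}(Z)$, has upper density zero. Combined with Proposition~\ref{prop:positive-density-shifts}, which guarantees $\den(\Sigma(T)) > 0$, this will produce infinitely many $j \in \Sigma(T) \setminus B$, yielding the desired strict inequality $\nu(Z) \succ \nu(W_j)$.

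Set $\nu(Z) = (d,e)$ and let $\mathcal{F} \coloneqq \{Z_i \subseteq N_\Phi(Z) : \nu_i = (d,e)\}$, which is finite. Suppose $\nu(W_j) \succeq (d,e)$. Then $W_j$ admits a component $Y \in N_\Phi(W_j)$ with $\nu_Y = (\lim_n \dim \ol{\Phi^n(Y)}, \dim Y) \succeq (d,e)$, and since $W_j \subseteq Z$, $Y$ is contained in some irreducible component $Z_i$ of $Z$. I would split the analysis into \emph{subcase (A)}, where $Y = Z_i$, and \emph{subcase (B)}, where $Y \subsetneq Z_i$ is a proper irreducible closed subset, hence $\dim Y < \dim Z_i$. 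In subcase (A), $Z_i \subseteq W_j$ is equivalent to $\Phi^j(Z_i) \subseteq Z$, and the combination of $Y = Z_i \in N_\Phi(W_j)$ with $\nu_Y \succeq \nu(Z)$ together with the maximality definition of $\nu(Z)$ forces $Z_i \in \mathcal{F}$. In subcase (B), $Y$ arises as an irreducible component of some $Z_i \cap \Phi^{-j}(Z_k)$ with $\Phi^j(Z_i) \not\subseteq Z_k$.

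The heart of the argument will be the following density-zero lemma for subcase (A): for each $Z_i \in \mathcal{F}$, the set $A_i \coloneqq \{j \in \NN : \Phi^j(Z_i) \subseteq Z\}$ satisfies $\den(A_i) = 0$. I would prove this by contradiction. If $\den(A_i) > 0$, then subadditivity of upper density over the finitely many components of $Z$ yields some $Z_k$ with $\den(\{j : \ol{\Phi^j(Z_i)} \subseteq Z_k\}) > 0$. Applying Proposition~\ref{prop:positive-density-shifts} to this positive-density set produces positive-density-many shifts $m$ for which a positive-density family of $j$'s satisfies $\ol{\Phi^{j}(Z_i)}, \ol{\Phi^{j+m}(Z_i)} \subseteq Z_k$. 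Stabilizing the descending chain $Z_k \supseteq Z_k \cap \Phi^{-m}(Z_k) \supseteq Z_k \cap \Phi^{-m}(Z_k) \cap \Phi^{-2m}(Z_k) \supseteq \cdots$ by Noetherianity of $U$ should then extract a union of irreducible components of $Z$ that contains $Z_i$ and is $\Phi^m$-invariant, contradicting $Z_i \not\subseteq P_\Phi(Z)$.

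Subcase (B) is the main obstacle I expect: the ``new'' components $Y \subsetneq Z_i$ of $W_j$ are not among the components of $Z$, so although $\dim Y < \dim Z_i$, the stable dimension $\dim_Y^\infty$ is not a priori bounded by $d$. My plan is to address this by induction on an appropriate well-founded complexity invariant, such as $\nu(Z)$ under $\prec$ (which is well-founded since $U$ has finite Krull dimension), applying the inductive hypothesis to the proper closed subsets $Z_i \cap \Phi^{-j}(Z_k) \subsetneq Z_i$ and the relevant $\Phi$-stable strata. The most delicate step will be verifying that only density-zero many $j$ produce such ``new'' non-periodic components with $\nu_Y \succeq (d,e)$, which requires stratifying $Z$ by iterated $\Phi$-orbit behavior and carefully interleaving the induction with a uniform Noetherian bound on the number of possible $Y$'s arising at each dimension.
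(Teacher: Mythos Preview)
Your strategy has a genuine gap: you never pass to the period $L$ of $P_\Phi(Z)$, and this omission is what generates both of the difficulties you flag. The paper's first move is to replace $T$ by $T\cap(a+L\NN)$ for a suitable residue $a$, so that every $j\in\Sigma(T)$ is a multiple of $L$. Once this is done, $P_\Phi(Z)\subseteq \Phi^{-j}(Z)$ and hence $P_\Phi(Z)\subseteq P_\Phi(W_j)$ for every $j\in\Sigma(T)$. Two things follow immediately: first, $\nu(W_j)\preceq\nu(Z)$ always, so your set $B$ is just the equality locus; second, any component $C$ of $W_j$ lying in $N_\Phi(W_j)$ automatically satisfies $C\not\subseteq P_\Phi(Z)$, so the ambient $Z_i\supseteq C$ lies in $N_\Phi(Z)$. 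With this in hand, your subcase (B) simply does not occur: if $\nu_C=(d,e)$ and $C\subseteq Z_i\in N_\Phi(Z)$, then the stable dimension of $Z_i$ is at least $d$ (since $C\subseteq Z_i$) and at most $d$ (since $\nu(Z)=(d,e)$), whence $\dim Z_i\leq e=\dim C$, forcing $C=Z_i\in\mathcal{F}$. So the elaborate induction you propose for subcase (B) is unnecessary, and your worry in subcase (A) that $Z_i$ might lie in $P_\Phi(Z)$ evaporates.

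Your ``stabilizing chain'' argument for $\den(A_i)=0$ also does not work as written: the intersection $\bigcap_{n}\Phi^{-nm}(Z_k)$ is a $\Phi^m$-invariant closed subset of $Z_k$, but there is no reason it contains $Z_i$, nor that it is a union of irreducible components of $Z$. The paper's replacement is a short dimension argument. After extracting a positive-density set $\Sigma_{i,k}\subseteq\Sigma(T)$ with $\Phi^j(Z_i)\subseteq Z_k$ for all $j\in\Sigma_{i,k}$ (and showing, again via the $L$-reduction, that $Z_k\in N_\Phi(Z)$), pick $a<b$ in $\Sigma_{i,k}$ with $a$ and $b-a$ larger than the stabilization index $m$. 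From $\Phi^a(Z_i)\subseteq Z_k$ one gets $\ol{\Phi^{b}(Z_i)}\subseteq\ol{\Phi^{b-a}(Z_k)}$; since both $Z_i$ and $Z_k$ have stable dimension exactly $d$, these two closures have the same dimension and hence coincide by irreducibility. But $\Phi^{b}(Z_i)\subseteq Z_k$ as well, so $\Phi^{b-a}(Z_k)\subseteq Z_k$, making $Z_k$ periodic and contradicting $Z_k\in N_\Phi(Z)$. This replaces your chain-stabilization entirely.
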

\begin{proof}
Let $Z_1, \ldots, Z_r$ be the irreducible components of $Z$. After relabelling, we may assume
\[
N_{\Phi}(Z)=Z_1\cup\dots\cup Z_s\quad\quad\text{and}\quad\quad P_{\Phi}(Z)=Z_{s+1}\cup\dots\cup Z_r.
\]
Let $L \in \ZZ^+$ such that $\Phi^L(P_{\Phi}(Z))\subseteq P_{\Phi}(Z)$. By Remark~\ref{rmk:extraction}, there is some $a\in \{0,1,\ldots,L-1\}$ such that $T\cap (a+L\mathbb{N})$ has positive density. Replacing $T$ by $T\cap (a+L\mathbb{N})$, we can assume that all elements of $\Sigma(T)$ are multiples of $L$.

Fix $m$ sufficiently large so that
\[
\dim\ol{\Phi^m(Z_i)}=\lim_{n\to\infty}\dim\ol{\Phi^n(Z_i)}
\]
for $i\leq s$, and let $\nu(Z)=(d,e)$. After relabeling, we may assume that there exist $1\leq \ell\leq t\leq s$ such that:
\begin{enumerate}
\item $\dim\ol{\Phi^m(Z_i)}=d$ and $\dim Z_i=e$ for $i\le \ell$,
\item $\dim\ol{\Phi^m(Z_i)}=d$ and $\dim Z_i<e$ for $\ell<i\le t$,
\item $\dim\ol{\Phi^m(Z_i)}<d$ for $t<i\le s$.
\end{enumerate}

We first claim that for every $j\in\Sigma(T)$, we have
\begin{equation}
\label{eq:P-Phi}
P_{\Phi}(Z\cap \Phi^{-j}(Z)) \supseteq P_{\Phi}(Z).
\end{equation}
To see this, first note that since $j$ is a multiple of $L$, we have $Z\cap \Phi^{-j}(Z)\supseteq P_{\Phi}(Z)$. So, it remains to show that every irreducible component $Z_i$ contained in $P_{\Phi}(Z)$ is also an irreducible component of $Z\cap \Phi^{-j}(Z)$. Since $Z_i$ is irreducible, it is contained in some irreducible component $Z'_i$ of $Z\cap \Phi^{-j}(Z)$. Then $Z_i \subseteq Z'_i \subseteq Z\cap \Phi^{-j}(Z) \subseteq Z$. As $Z_i$ is already an irreducible component of $Z$ and $Z'_i$ is irreducible, it follows that $Z_i = Z'_i$ is an irreducible component of $Z\cap \Phi^{-j}(Z)$.

By \eqref{eq:P-Phi}, we necessarily have $\nu(Z\cap \Phi^{-j}(Z)) \preceq \nu(Z)$. Suppose that
\begin{equation}
\label{eq:assumption}
\nu(Z\cap \Phi^{-j}(Z)) = \nu(Z)
\end{equation}
for every sufficiently large $j\in \Sigma(T)$. We shall derive a contradiction in the remainder of the proof.

We next claim that for every sufficiently large $j\in \Sigma(T)$, there is some $i\le \ell$ such that
\begin{equation}
\label{eq:dim}
Z_i\subseteq \Phi^{-j}(Z).
\end{equation}
If $j\in\Sigma(T)$ is sufficiently large, then by \eqref{eq:assumption}, there is an irreducible component $C$ of $Z\cap \Phi^{-j}(Z)$ not contained in $P_{\Phi}(Z\cap \Phi^{-j}(Z))$ such that $\dim C = e$ and $\dim \ol{\Phi^n(C)} \ge d$ for all $n\ge 0$. We have $C \subseteq Z_i\cap \Phi^{-j}(Z)$ for some $1\le i\le r$.
By \eqref{eq:P-Phi}, we see $C\not\subseteq P_{\Phi}(Z)$, and so $Z_i$ is not an irreducible component contained in $P_{\Phi}(Z)$, i.e., $i\le s$. Next observe that
\[
d\le \dim \ol{\Phi^m(C)} \le \dim\ol{\Phi^m(Z_i\cap \Phi^{-j}(Z))} \le \dim\ol{\Phi^m(Z_i)},
\]
and so $i\leq t$. Moreover, since $C \subseteq Z_i\cap \Phi^{-j}(Z)\subseteq Z_i$, we see $\dim Z_i\geq e$ and hence $i\leq\ell$. By dimension contraints, $C=Z_i$ which implies \eqref{eq:dim}.

Since Proposition \ref{prop:positive-density-shifts} shows that $\Sigma(T)$ has positive density, by the subadditivity of natural density, there exists a fixed $i\in \{1, \ldots, \ell \}$ and a positive density subset $\Sigma_i\subseteq\Sigma(T)$ such that equation~\eqref{eq:dim} holds for all $j\in\Sigma_i$.
Further refining, there exists $k\in\{1,\ldots, r\}$ and a positive density subset $\Sigma_{i,k}\subseteq\Sigma_i$ such that
\[
Z_i\subseteq \Phi^{-j}(Z_k)
\]
for all $j\in \Sigma_{i,k}$.

We next show that $k\leq s$. If this were not the case, then $Z_k\subseteq P_{\Phi}(Z)$ and so $\Phi^j(Z_i)\subseteq Z_k\subseteq P_{\Phi}(Z)$.
In particular, since $j$ is a multiple of $L$, we have $\Phi^j(Z_i \cup P_{\Phi}(Z))\subseteq P_{\Phi}(Z)\subseteq Z_i \cup P_{\Phi}(Z)$. By maximality of $P_{\Phi}(Z)$, it follows that $P_{\Phi}(Z)=Z_i \cup P_{\Phi}(Z)$, and hence $Z_i\subseteq P_{\Phi}(Z)$, a contradiction.

Since $\Sigma_{i,k}$ is infinite, there exist $a,b\in \Sigma_{i,k}$ with $b-a,a>m$.
We write $b=a+Lc$ with $c>0$. Since $Z_i\subseteq\Phi^{-a}(Z_k)$, we have $\Phi^{a+Lc}(Z_i)= \Phi^{Lc}(\Phi^a(Z_i))\subseteq \Phi^{Lc}(Z_k)$, and hence $\ol{\Phi^{a+Lc}(Z_i)}\subseteq \ol{\Phi^{Lc}(Z_k)}$.
As $a+Lc,Lc>m$, we see $\dim\ol{\Phi^{Lc}(Z_k)}\leq d=\dim\ol{\Phi^{a+Lc}(Z_i)}$.
Then by irreducibility of $Z_k$, we have $\overline{\Phi^{a+Lc}(Z_i)} = \overline{\Phi^{Lc}(Z_k)}$.
On the other hand, $b\in\Sigma_{i,k}$, so $\Phi^{a+Lc}(Z_i)\subseteq Z_k$, which implies 
\[
\Phi^{Lc}(Z_k)\subseteq\overline{\Phi^{Lc}(Z_k)}=\overline{\Phi^{a+Lc}(Z_i)}\subseteq Z_k.
\]
So, $Z_k$ is periodic and hence contained in $P_{\Phi}(Z)$, contradicting the fact that $k\le s$.
\end{proof}

\begin{lemma}
\label{lem:prec}
Let $U$ be a non-empty Noetherian topological space of Krull dimension $d$. Suppose that
\[
Z_0\supseteq Z_1\supseteq Z_2\supseteq \cdots \supseteq Z_m
\]
is a descending chain of non-periodic closed subsets of $U$ such that $\nu(Z_0)\succ \nu(Z_1)\succ \cdots \succ \nu(Z_m)$.
Then $m < (d+1)^2$.
\end{lemma}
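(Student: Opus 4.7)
My plan is to observe that the statement is essentially a pigeonhole argument: under the hypotheses, the values $\nu(Z_j)$ live in a finite totally ordered set of size $(d+1)^2$, and a strictly decreasing chain in such a set has length bounded accordingly.

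First I would unpack the definition of $\nu$. Because each $Z_j$ is non-periodic, $N_\Phi(Z_j)\neq\varnothing$, so $\nu(Z_j)\neq(-\infty,-\infty)$. Writing $\nu(Z_j)=(d^{(j)},e^{(j)})$, the second coordinate is the dimension of an irreducible component of $Z_j\subseteq U$, hence $0\le e^{(j)}\le d$. The first coordinate is the limit of the weakly decreasing sequence $\dim\overline{\Phi^n(W)}$ for some irreducible component $W$ of $N_\Phi(Z_j)$, so $0\le d^{(j)}\le\dim W=e^{(j)}\le d$. In particular, each $\nu(Z_j)$ belongs to the set
\[
\Lambda \coloneqq \{0,1,\ldots,d\}\times\{0,1,\ldots,d\},
\]
which has cardinality $(d+1)^2$.

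Next I would use the fact that the order $\prec$ restricts to a strict total order on $\Lambda$. Since the chain $\nu(Z_0)\succ\nu(Z_1)\succ\cdots\succ\nu(Z_m)$ is strictly decreasing, its $m+1$ entries are pairwise distinct elements of $\Lambda$. Hence $m+1\le|\Lambda|=(d+1)^2$, i.e.\ $m<(d+1)^2$, which is exactly the desired conclusion.

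There is essentially no obstacle here beyond verifying that both coordinates of $\nu(Z_j)$ are bounded by $d$; the chain condition on closed subsets of $U$ (which follows from $U$ being Noetherian) is not even needed once one has the pigeonhole bound, since the strict decrease in $\nu$ already forces the length bound. Consequently the proof reduces to this short counting argument.
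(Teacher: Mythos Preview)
Your proposal is correct and is essentially the same pigeonhole argument as the paper's proof: both observe that each $\nu(Z_j)$ lies in $\{0,\ldots,d\}\times\{0,\ldots,d\}$ and that a strictly $\prec$-decreasing sequence in this set has at most $(d+1)^2$ terms. The paper organizes the count by grouping indices according to the first coordinate, while you appeal directly to the total order on $\Lambda$; these are cosmetic differences only.
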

\begin{proof}
We necessarily have $\nu(Z_0)\preceq (d,d)$. Write $\nu(Z_i)=(d_i,e_i)$.  Then by definition of $\prec$, we have $d\ge d_0\ge d_1\ge \cdots \ge d_m$.  For $s\in \{0,1,\ldots ,d\}$, let $A_s=\{i : d_i=s\}$.  Then $A_s$ is an interval.  Notice that if $A_s=\{j,j+1,\ldots ,j+\ell\}$, then since $d_j = \cdots = d_{j+\ell} = s$, we must have $e_j>e_{j+1}>\cdots > e_{j+\ell}$.  Since $e_j\le d$, we see that $\ell\le d$.  
Hence $|A_s| \le d+1$ for each $s\in \{0,1,2,\ldots ,d\}$.  Then since $\{0,1,2,\ldots ,m\}=A_0\cup A_1\cup \cdots \cup A_d$, we see that $m+1\le (d+1)^2$, as required.
\end{proof}

\subsection{Finishing the proof of Theorem~\ref{thm:uniform-limsup}}
 
\begin{proof}[Proof of Theorem~\ref{thm:uniform-limsup}]

We divide the proof into several steps.

\begin{step}
\label{step:set-up}
We shall start with the following set-up.
Let
\[
U = X \setminus \bigcup_{n\in \NN} (I_{\Phi^n}\cup I_{f\circ\Phi^n}),
\]
which may not be open. We endow $U$ with the subspace topology, thereby making it a Noetherian topological space. Clearly, $U(\Qbar)=X_{\Phi,f}(\Qbar)$. If $U$ does not have any $\Qbar$-points, then the theorem is vacuously true, so we assume that there is an $x\in U(\Qbar)$ such that $f(\cO_\Phi(x))$ is infinite. We may also assume that $X$, $\Phi$, $f$ and $x$ are all defined over a fixed number field $\KK$. By construction, $\Phi|_U$ is a regular self-map of $U$ and $f|_U \colon U \to \bP^1$ is regular; by abuse of notation, we denote these restriction maps by $\Phi$ and $f$, respectively. Finally, replacing $U$ by the Zariski closure of the orbit $\cO_\Phi(x)$ in $U$, we may assume $\cO_\Phi(x)$ is Zariski dense in $U$; see also the end of the introduction of \cite{BGS}.

So, from now on, we may assume:
\begin{enumerate}
\item $U\subseteq X$ is a Noetherian topological space;
\item $\Phi$ is regular on $U$ and $\Phi(U)\subseteq U$;
\item $f \colon U \to \PP^1$ is regular;
\item $x\in U(\Qbar)$ whose orbit $\cO_\Phi(x)$ is Zariski dense in $U$;
\item $f(\cO_\Phi(x))$ is infinite.
\end{enumerate}
\end{step}

\begin{step}
\label{step:construct-Zm}
Let $T \subseteq \NN$ be a subset of positive density, $d = \dim (U \times U)$, and
\[
Z_0=\{(u,v) \in U\times U : f(u) = f(v)\}.
\]
Note that $Z_0$ is a closed subset of $U\times U$ since it is the inverse image of the diagonal $\Delta_{\bP^1} \subseteq \bP^1 \times \bP^1$ under the product map $(f, f) \colon U \times U \to \bP^1 \times \bP^1$.
Applying Proposition~\ref{prop:strictly-decreasing} to $Z_0 \subset U\times U$, the product map $(\Phi, \Phi)$, and $T$, we see that there is some $i_0\in \Sigma(T)$ such that $T_1 \coloneqq T\cap (T+i_0)$ has positive density and $Z_1 \coloneqq Z_0\cap (\Phi,\Phi)^{-i_0}(Z_0)$ satisfies $\nu(Z_1) \prec \nu(Z_0)$. If $Z_1 = P_{(\Phi, \Phi)}(Z_1)$ is periodic under $(\Phi, \Phi)$, then let $m=1$. Otherwise, applying Proposition~\ref{prop:strictly-decreasing} to $Z_1$ yields an element $i_1\in \Sigma(T_1)\subseteq\Sigma(T)$ with $i_0<i_1$ such that $T_2 \coloneqq T_1\cap (T_1+i_1)$ has positive density and $Z_2 \coloneqq Z_1\cap (\Phi,\Phi)^{-i_1}(Z_1)$ satisfies $\nu(Z_2) \prec \nu(Z_1)$. Proceeding in this manner, we obtain a sequence of integers
\[
0<i_0<i_1<\dots<i_m
\]
and a descending chain of closed subsets $Z_0\supseteq Z_1\supseteq Z_2\supseteq \cdots \supseteq Z_m$ such that $\nu(Z_i) \succ \nu(Z_{i+1})$ and $Z_m = P_{(\Phi, \Phi)}(Z_m)$, i.e., $Z_m$ is periodic. Furthermore, by construction, if
\[
S \coloneqq \left\{ \sum_{i\in I} i : I \subseteq \{ i_0, \dots, i_m\} \right\}
\]
then
\[
T' \coloneqq \bigcap_{s \in S} (T+s) \subseteq T
\]
has positive density,
\[
Z_m = \bigcap_{s\in S}(\Phi,\Phi)^{-s}(Z_0)
\]
and there is some $L\in \ZZ^+$ such that
\[
(\Phi, \Phi)^L(Z_m) \subseteq Z_m.
\]
Lastly, Lemma~\ref{lem:prec} implies
\[
|S| \le 2^{m+1} \le 2^{(d+1)^2}.
\]
Notice that $Z_m \neq \emptyset$ since the diagonal $\Delta_U\subseteq U\times U$ is contained in $Z_0$ and $(\Phi,\Phi)^{-n}(\Delta_U)\supseteq \Delta_U$ for every $n\ge 0$. 
\end{step}

\begin{step}
\label{step:Schanuel-argument}

By Schanuel's Theorem \ref{thm:Schanuel}, there exists a positive real number $\kappa>0$ depending only on the number field $\KK$ such that for all sufficiently large $B$, we have
\begin{equation}
\label{eqn:Schanuel}
\# \big \{ y\in \bP^1(\KK) : h(y) \le \log B \big \} \le B^{\kappa}.
\end{equation}
Choose an $\epsilon$ independent of $T$ such that $0<\epsilon<(2^{(d+1)^2+1}\kappa)^{-1}$.
We shall prove that this choice of $\epsilon$ satisfies the conclusion of Theorem~\ref{thm:uniform-limsup}.
Suppose to the contrary that
\[
\limsup_{n\in T} \frac{h(f(\Phi^n(x)))}{\log n} \le \epsilon.
\]
In particular, there is a positive integer $N_0$ such that
$h(f(\Phi^n(x))) \le 2\epsilon \log n$
for all $n\in T$ with $n\geq N_0$.

First, by equation (\ref{eqn:Schanuel}), we have
\begin{equation}
\label{eq:Schanuel-on-orbit}
\# \left \{f(\Phi^n(x)) \in \PP^1(\KK) : n\in T,\; N_0\leq n\leq N \right \} \le N^{2\epsilon\kappa}
\end{equation}
for $N$ sufficiently large. Let
\[
T'' = \bigcap_{s \in S} (T-s) \subseteq T.
\]
Since $\den(T')>0$ and $T''=T'-(i_0+\dots+i_m)$, we see $\den(T'')>0$. By construction, for any $j\in T''$, we have $j+s\in T$ for every $s\in S$.
In particular, equation \eqref{eq:Schanuel-on-orbit} implies
\begin{equation}
\label{eq:P1-to-the-S}
\begin{aligned}
\# & \left\{ \left( f(\Phi^{j+s}(x)) \right)_{s\in S} \in \bP^1(\KK)^{|S|} : j\in T'',\; N_0\le j\le N - (i_0+\dots+i_m) \right\} \\
\le \ & \prod_{s\in S} \# \left\{ f(\Phi^{j+s}(x)) \in \bP^1(\KK) : j\in T'',\; N_0-s \le j \le N-s \right\} \\
\le \ & \prod_{s\in S} \# \left\{ f(\Phi^{j+s}(x)) \in \bP^1(\KK) : j+s\in T,\; N_0 \le j+s \le N \right\} \\
\le \ & N^{2\epsilon\kappa\cdot |S|}.
\end{aligned}
\end{equation}
\end{step}

\begin{step}
\label{step:lower-bnd}
Let $L$ be as in Step~\ref{step:construct-Zm}. Since $\den(T''\cap[N_0,\infty)) = \den(T'')>0$, by subadditivity of natural density, there exists an integer $a\in [0, L)$ such that $\{ j\in T'' : j\ge N_0, \; j \equiv a \pmod{L} \}$ has positive density. Then by the definition of natural density (see Definition~\ref{def:density}), there exist a subsequence $(n_\ell)_{\ell\in \ZZ^+}$ of positive integers and a positive real number $\delta>0$, such that
\[
\# \big\{ j\in T'' : N_0 \le j \le n_\ell, \; j \equiv a \mathrm{\ (mod\ } L) \big\} \ge \delta n_\ell
\]
for sufficiently large $\ell$. Now, replacing $\delta$ by a smaller positive number if necessary, we can further assume that
\begin{equation}
\label{eq:arithmeic-progression-pos-density}
\# \big \{j\in T'' : N_0\le j\le n_\ell - (i_0+\cdots+i_m), \; j \equiv a \mathrm{\ (mod\ } L) \big \} \ge \delta n_\ell,
\end{equation}
for sufficiently large $\ell$.

Recall from Step~\ref{step:construct-Zm} that $|S| \le 2^{m+1} \le 2^{(d+1)^2}$ and hence $2\epsilon\kappa\cdot |S| < 1$.
Therefore, we can choose $\ell$ large enough such that $\delta n_\ell > n_\ell^{2\epsilon\kappa\cdot |S|}$. 
Combining equation \eqref{eq:arithmeic-progression-pos-density} with \eqref{eq:P1-to-the-S} where $N=n_\ell$, a direct counting argument yields that there exist positive integers $i,j\in T''$ with $i<j$ such that
\[
f(\Phi^{i+s}(x))=f(\Phi^{j+s}(x)) \text{ for all } s\in S \quad\text{and}\quad i \equiv j \mathrm{\ (mod\ }L).
\]

Then by definition, $(\Phi^i(x), \Phi^j(x)) \in Z_m$ (see Step~\ref{step:construct-Zm} for the construction and properties of $Z_m$). Since $(\Phi, \Phi)^L(Z_m) \subseteq Z_m$, we have $(\Phi^{kL+i}(x), \Phi^{kL+j}(x))\in Z_m$ for every $k\in \NN$. As $Z_m \subseteq Z_0$, the definition of $Z_0$ yields 
\[
f(\Phi^{kL+i}(x)) = f(\Phi^{kL+j}(x)).
\]
It thus follows from the fact that $i \equiv j \pmod L$ that the sequence $\{ f(\Phi^{kL}(\Phi^i(x))) : k\in \NN\}$ is periodic.
In particular, the orbit $\cO_{\Phi^{L}}(\Phi^i(x))$ of $\Phi^i(x)$ under $\Phi^L$ is contained in finitely many fibers $F_1, \ldots, F_s$ of $f$.
Note that
\[
\cO_\Phi(\Phi^i(x)) = \bigcup_{t=0}^{L-1} \cO_{\Phi^L}(\Phi^{i+t}(x)) = \bigcup_{t=0}^{L-1} \Phi^t(\cO_{\Phi^L}(\Phi^{i}(x))) \subseteq \bigcup_{t=0}^{L-1} \Phi^t(F_1 \cup \cdots \cup F_s).
\]
Therefore, the full forward orbit $\cO_\Phi(x)$ is contained in some proper closed subset of $U$, which contradicts our assumption in Step~\ref{step:set-up} that $\cO_\Phi(x)$ is Zariski dense in $U$.
We thus complete the proof of Theorem~\ref{thm:uniform-limsup}.
\qedhere
\end{step}
\end{proof}


\section{Applications of Theorems \ref{thm:limsup} and \ref{thm:uniform-limsup}}
\label{sec:apps}

\subsection{Weak $\liminf$ Height Gaps}

Theorem~\ref{thm:weak-liminf}, which asserts that Conjecture~\ref{conj:liminf} holds away from a set of density zero, is an immediate consequence of Theorem~\ref{thm:uniform-limsup}.

\begin{proof}[Proof of Theorem \ref{thm:weak-liminf}]
Suppose that $f(\OO_\Phi(x))$ is infinite.
Let $\epsilon>0$ be the positive real number as in Theorem~\ref{thm:uniform-limsup} and
\[
S \coloneqq \left\{ n\in\NN : \frac{h(f(\Phi^n(x)))}{\log n}\leq\frac{\epsilon}{2} \right\}.
\]
Then by Theorem~\ref{thm:uniform-limsup}, $S$ has density zero, which concludes the proof.
\end{proof}

\subsection{Height gaps for $D$-finite power series}

We apply Theorem~\ref{thm:limsup} to obtain a simple proof of Theorem~\ref{thm:D-finite-gap} recovering the univariate case of \cite[Theorem~1.3(c)]{BNZ}.

\begin{proof}[Proof of Theorem~\ref{thm:D-finite-gap}]
If $\sum_{n\geq0} a_n z^n \in \Qbar[[z]]$ is a $D$-finite power series, then there is a rational self-map $\Phi\colon\mathbb{P}^d\dra \mathbb{P}^d$ for some $d\ge 2$, a point $c\in \mathbb{P}^d(\Qbar)$, and a rational map $f\colon \mathbb{P}^d\dra \mathbb{P}^1$ such that $a_n = f(\Phi^n(c))$ for $n\gg 0$;
see e.g., \cite[Section~3.2.1]{DML-book}.
So, Theorem~\ref{thm:limsup} immediately implies Theorem~\ref{thm:D-finite-gap}.
\end{proof}

\subsection{Weak Dynamical Mordell--Lang}

As mentioned before in the introduction, the $\liminf$ Height Gap Conjecture~\ref{conj:liminf} would imply the Dynamical Mordell--Lang Conjecture.
Similarly, we deduce Theorem~\ref{thm:weak-DML} as an application of Theorem~\ref{thm:weak-liminf}.

\begin{proof}[Proof of Theorem~\ref{thm:weak-DML}]
For any $n\in \NN$, we denote by $Z_{\ge n}$ the Zariski closure of $\{\Phi^i(x) : i\ge n\}$ in $X$. Since $X$ is a Noetherian topological space, there is some $m\in \NN$ such that $Z_{\ge n} = Z_{\ge m}$ for every $n\ge m$. Denote $Z_{\ge m}$ by $Z$ and $\Phi^m(x)$ by $x_1$. It then suffices to prove Theorem~\ref{thm:weak-DML} for $(Z, \Phi|_{Z}, x_1, Y\cap Z)$.

Let $Z_1, \ldots, Z_r$ denote the irreducible components of $Z$ and let $Y_i \coloneqq Y\cap Z_i$. Then $x_1 \in Z_i$ for some $i$. After relabeling, we may assume that $x_1 \in Z_1$. For each $i=2,\ldots,r$, choose an arbitrary $x_i \in \cO_\Phi(x_1) \cap Z_i$; the intersection is non-empty since $\cO_\Phi(x_1)$ is dense in $Z$ by definition. We claim that $\Phi|_Z$ cyclically permutes the irreducible components $Z_i$ of $Z$. To see this, first note that $\Phi|_Z$ is a dominant rational self-map of $Z$, so it permutes the $Z_i$. Suppose that $(Z_{i_1}, \ldots, Z_{i_s})$ is a cycle under $\Phi|_Z$ with $1\le s \le r$, and consider the forward orbit $\cO_\Phi(x_{i_1})$ of $x_{i_1} \in Z_{i_1}$. Clearly, the closure of $\cO_\Phi(x_{i_1})$ in $X$ is contained in the union of $Z_{i_1}, \ldots, Z_{i_s}$. On the other hand, $x_{i_1}=\Phi^n(x)$ for some $n\ge m$, and so the closure of $\cO_\Phi(x_{i_1})$ in $X$ is $Z_{\ge n}=Z$. It follows that $s=r$ and hence $\Phi|_Z$ is a cyclic permutation $(Z_{i_1}, \ldots, Z_{i_r})$.  Hence $\Phi^r(Z_i)\subseteq Z_i$ for each $i$. Moreover, after relabeling, we may assume that $\Phi(Z_i) \subseteq Z_{i+1}$ for $i=1,\ldots,r-1$ and $\Phi(Z_r) \subseteq Z_{1}$. So, for $i=2,\ldots,r$, our $x_i$ could be taken to be $\Phi^{i-1}(x_1)$. Therefore, by subadditivity of natural density, it suffices to show Theorem~\ref{thm:weak-DML} for $(Z_i, \Phi^r|_{Z_i}, x_i, Y_i)$ for each $i=1, \ldots, r$.

We claim further that for each $i$, the forward orbit $\cO_{\Phi^r}(x_i)$ of $x_i\in Z_i$ under $\Phi^r$ is dense in $Z_i$.
In fact, if we denote the irreducible decomposition of the closure of $\cO_{\Phi^r}(x_i)$ by $W_{i,1},\ldots,W_{i,r_i}$, then 
\[
\bigcup_{i=1}^r Z_i = Z = \overline{\cO_\Phi(x_1)} = \bigcup_{i=1}^r \overline{\cO_{\Phi^r}(x_i)} = \bigcup_{i=1}^r \bigcup_{j=1}^{r_i} W_{i,j}.
\]
Since $Z_i$ is irreducible, $Z_i \subseteq W_{k,j}$ for some $1\le k \le r$ and $1\le j \le r_k$.
However, we note that $W_{k,j} \subseteq \overline{\cO_{\Phi^r}(x_k)} \subseteq Z_k$.
As $Z_1,\ldots,Z_r$ are the irreducible components of $Z$, we must have $k=i$.
The claim $\overline{\cO_{\Phi^r}(x_i)} = Z_i$ thus follows.

We shall prove that either $\cO_{\Phi^r}(x_i)\subseteq Y_i$, or the set
\[
A_i \coloneqq \left\{ n\in \NN : \Phi^{rn}(x_i) \in Y_i \right\}
\]
has zero density, thereby proving Theorem~\ref{thm:weak-DML} for $(Z_i, \Phi^r|_{Z_i}, x_i, Y_i)$. If $Y_i=Z_i$ or $Y_i=\emptyset$, then the result is immediate. Thus we may assume, without loss of generality, that $Y_i$ is a non-empty proper subvariety of $Z_i$. We pick a non-constant morphism $f_i\colon Z_i \to \mathbb{P}^1$ such that $f_i(Y_i)=1$; one can accomplish this by choosing a non-constant rational function $F_i$ vanishing on $Y_i$ and then letting $f_i \coloneqq F_i + 1$. In particular, if $\Phi^{rn}(x_i)\in Y_i$, then $h(f_i(\Phi^{rn}(x_i)))=0$.
On the other hand, as $\cO_{\Phi^r}(x_i)$ is dense in $Z_i$, $f_i(\cO_{\Phi^r}(x_i))$ is necessarily infinite.
So, by Theorem~\ref{thm:weak-liminf}, there is a positive constant $C$ and a set $S\subset\NN$ of zero density such that for any $n\in \NN \setminus S$, the height of $f_i(\Phi^{rn}(x_i))$ is greater than $C\log n > 0$; in particular, for such an $n$, $\Phi^{rn}(x_i)\notin Y_i$.
It follows that $A_i \subseteq S$ has zero density, as required.
We hence complete the proof of Theorem~\ref{thm:weak-DML}.
\end{proof}


\section{Height gaps for commuting rational self-maps: Proof of Theorem \ref{thm:multi-limsup}}
\label{sec:multivariate-ht-gaps}

We begin this section by proving Theorem~\ref{thm:multi-limsup}.

\begin{proof}[Proof of Theorem~\ref{thm:multi-limsup}]
We fix a number field $\KK$ such that $X$, $\Phi_1,\ldots,\Phi_m$, $f$ and $x$ are defined over $\KK$.
We shall prove this theorem by induction on $m$.
When $m=1$, it reduces to Theorem~\ref{thm:uniform-limsup}.
Let us assume $m\ge 2$.
Suppose that Theorem~\ref{thm:multi-limsup} holds true for the semigroup $S_{m-1}$ generated by $\Phi_1,\ldots,\Phi_{m-1}$.
Assume that $f(\cO_{\Phi_1,\ldots,\Phi_m}(x))$ is infinite.
If $f(\cO_{\Phi_1,\ldots,\Phi_{m-1}}(x))$ is infinite, then by the induction hypothesis, we have
\[
\limsup_{\norm{\bn} \in T} \frac{h(f(\Phi^\bn(x)))}{\log \norm{\bn}} \ge
\limsup_{\substack{\norm{\bn} \in T \\ \bn=(n_1,\ldots,n_{m-1},0)\in \NN^m}} \frac{h(f(\Phi^\bn(x)))}{\log \norm{\bn}} > \epsilon_1,
\]
where $\epsilon_1 > 0$ depends only on $\KK$ and $X$.
Thus we may assume that $f(\cO_{\Phi_1,\ldots,\Phi_{m-1}}(x))$ is finite.

As in Step~\ref{step:set-up} of the proof of Theorem~\ref{thm:uniform-limsup}, there exists a Noetherian topological space $U$ such that $U(\Qbar) = X_{\Phi_1,\ldots,\Phi_m,f}(\Qbar)$, the $\Phi_i$ are regular self-maps of $U$, $f\colon U \to \bP^1$ is also regular, and $x\in U(\Qbar)$.
Now, let $Z$ denote the Zariski closure of the orbit $\cO_{\Phi_1,\ldots,\Phi_{m-1}}(x)$ of $x$ under the semigroup $S_{m-1} = \langle \Phi_1,\ldots,\Phi_{m-1} \rangle$ in $U$, and let $Z_1,\ldots,Z_r$ be the irreducible components of $Z$.
By assumption, $\cO_{\Phi_1,\ldots,\Phi_{m-1}}(x)$ is contained in finitely many fibers of $f \colon U \to \bP^1$, then so is its closure $Z$.
In particular, $f$ is constant on each $Z_i$.
Note that
\[
f(\cO_{\Phi_1,\ldots,\Phi_{m}}(x)) = \bigcup_{j\in \NN} f({\Phi_m^j(\cO_{\Phi_1,\ldots,\Phi_{m-1}}(x))}) \subseteq \bigcup_{i=1}^r \bigcup_{j\in \NN} f({\Phi_m^j(Z_i)}).
\]
There are two cases to consider.

Case 1. For every $i=1,\ldots,r$ and every $j \in \NN$, we have that the image of $\Phi_m^j(Z_i)$ under $f$ is finite; in particular, $f$ is constant on $\Phi_m^j(Z_i)$.
Then by assumption, there is some $i$ such that the sequence $\{f({\Phi_m^j(Z_i)}) : j \in \NN\}$ is infinite.
Let us pick some $z_i \in Z_i$ from the orbit $\cO_{\Phi_1,\ldots,\Phi_{m-1}}(x)$ of $x$, say $z_i = \Phi_1^{k_1}\circ \cdots \circ \Phi_{m-1}^{k_{m-1}}(x)$ for some fixed $k_1,\ldots,k_{m-1}$.
Then the result follows from the univariate case applying to $z_i$ and $\Phi_m$.
In fact,
\begin{align*}
\limsup_{\norm{\bn} \in T} \frac{h(f(\Phi^\bn(x)))}{\log \norm{\bn}} &\ge
\limsup_{\substack{\norm{\bn} \in T \\ \bn=(k_1,\ldots,k_{m-1},n_m)\in \NN^m}} \frac{h(f(\Phi^{\bn}(x)))}{\log \norm{\bn}} \\
&= \limsup_{n_m \in \, T-\sum k_{i}} \frac{h(f(\Phi_m^{n_m}(z_i)))}{\log n_m} \\
&> \epsilon_2,
\end{align*}
where $\epsilon_2 > 0$ from Theorem~\ref{thm:uniform-limsup} depends only on $\KK$ and $X$.

Case 2. There is some fixed $i_0$ with $1\le i_0 \le r$ and some fixed $j_0\ge 1$ such that the map induced by $f$ from the closure of $\Phi_m^{j_0}(Z_{i_0})$ to $\bP^1$ is dominant.
Note that
\[
\overline{{\Phi_m^{j_0}(\cO_{\Phi_1,\ldots,\Phi_{m-1}}(x))}} \supseteq {\Phi_m^{j_0}(\cup_{i=1}^r Z_i)} = \bigcup_{i=1}^r {\Phi_m^{j_0}(Z_i)}.
\]
So the restriction of $f$ to the closure of $\Phi_m^{j_0} \circ S_{m-1}(x) = \Phi_m^{j_0}(\cO_{\Phi_1,\ldots,\Phi_{m-1}}(x))$ in $U$ is dominant.
But $\Phi_m$ commutes with $S_{m-1}$ and so letting $x_{j_0}=\Phi_m^{j_0}(x)$, we see that the restriction of $f$ to the closure of $\cO_{\Phi_1,\ldots,\Phi_{m-1}}(x_{j_0})$ is dominant.
In particular, $f(\cO_{\Phi_1,\ldots,\Phi_{m-1}}(x_{j_0}))$ is infinite.
Now by the induction hypothesis, we also get
\begin{align*}
\limsup_{\norm{\bn} \in T} \frac{h(f(\Phi^\bn(x)))}{\log \norm{\bn}} &\ge
\limsup_{\substack{\norm{\bn} \in T \\ \bn=(n_1,\ldots,n_{m-1},j_0)\in \NN^m}} \frac{h(f(\Phi^\bn(x)))}{\log \norm{\bn}} \\
&= \limsup_{\substack{\norm{\bn} \in T-j_0 \\ \bn=(n_1,\ldots,n_{m-1})\in \NN^{m-1}}} \frac{h(f(\Phi^\bn(x_{j_0})))}{\log \norm{\bn}}\\
&> \epsilon_1.
\end{align*}
Set $\epsilon \coloneqq \min\{\epsilon_1, \epsilon_2\}$.
We thus complete the proof of Theorem~\ref{thm:multi-limsup} by induction.
\end{proof}

\begin{remark}
If we denote the corresponding constant in Theorem~\ref{thm:uniform-limsup} for each $\Phi_i$ by $\epsilon_i$, then it follows readily from the proof that the $\epsilon$ in Theorem~\ref{thm:multi-limsup} is the minimum of the $\epsilon_i$.
\end{remark}

The example below explains why we have to consider sets $T = \{||\bn|| : \bn \in \bT\}$ of positive density in Theorem~\ref{thm:multi-limsup}, rather than sets $\bT \subseteq \NN^m$ of positive density.\footnote{Generalizing Definition~\ref{def:density}, the {\it upper asymptotic (or natural) density} of $\bT \subseteq \NN^m$ is defined by $\den(\bT) \coloneqq \limsup_{n\to \infty} |\bT \cap [0, n]^m|/(n+1)^m$.
}

\begin{example}
\label{ex:multi-density}
We define two self-maps $\Phi_1$ and $\Phi_2$ of $X=\AA^3$ as follows:
\[
\Phi_1(x,y,z) = (2x,y+1,z) \quad \text{and} \quad \Phi_2(x,y,z) = (xz,y,z+1).
\]
It is easy to verify that $\Phi_1\circ \Phi_2(x,y,z) = \Phi_2\circ \Phi_1(x,y,z) = (2xz, y+1, z+1)$.
Now let us consider the point $(1,0,0)$.
First, for any $n_2\in \ZZ^+$ and $n_1\in \NN$, we have $\Phi_2^{n_2}(1,0,0) = (0,0,n_2)$ and $\Phi_1^{n_1}(0,0,n_2) = (0,n_1,n_2)$.
So,
\[
\Phi^{n_1,n_2}(1,0,0) =
\begin{cases}
  (0,n_1,n_2) & \text{if } n_2>0, \\
  (2^{n_1},n_1,0) & \text{if } n_2=0.
\end{cases}
\]
Let $f(x,y,z)=x$ be the projection to the $x$-coordinate.
Then $f(\Phi^{n_1,n_2}(1,0,0)) = 0$ if $n_2>0$, and $2^{n_1}$ if $n_2=0$.
It follows that if $\bT\subseteq \NN^2$, then
\[
\limsup_{(n_1,n_2)\in \bT} \frac{h(f(\Phi^{n_1,n_2}(1,0,0)))}{\log(n_1+n_2)} > 0
\]
only if $\bT$ contains infinitely many points from the ray $R \coloneqq \{(n_1,n_2) \in \NN^2 : n_2=0\}$ which has density zero. On the other hand, the $\liminf$ of the above quantity is zero as long as $\bT$ has positive density.
Moreover, the only set $\bT$ over which the $\liminf$ is positive is a subset of the above ray $R$ plus finitely many points.
\end{example}

Before we move to a concluding remark on Theorem~\ref{thm:multi-limsup}, let us recall the precise definition of multivariate $D$-finite power series due to Lipshitz.

\begin{definition}[{\cite{Lipshitz89}}]
\label{def:multi-D-finite}
A formal power series $F(\bz) = F(z_1,\ldots,z_m) \in \CC[[\bz]]$ is said to be {\it $D$-finite} over $\CC(\bz)$, if the set of all derivatives $(\partial/\partial z_1)^{i_1} \cdots (\partial/\partial z_m)^{i_m}$ with $i_j\in \NN$ span a finite-dimensional $\CC(\bz)$-vector space.
Equivalently, for each $i\in \{1,\ldots,m\}$, $F(\bz)$ satisfies a nontrivial linear partial differential equation of the form
\[
\left\{ p_{i,n_i}(\bz)\Big(\frac{\partial}{\partial z_i}\Big)^{n_i} + p_{i,n_i-1}(\bz)\Big(\frac{\partial}{\partial z_i}\Big)^{n_i-1} + \cdots + p_{i,0}(\bz) \right\}F(\bz) = 0,
\]
where the $p_{i,j}(\bz) \in \CC[\bz]$.
\end{definition}

At the end of the introduction, we mentioned that it appears to be a subtle issue to deduce a multivariate $D$-finiteness result (e.g., \cite[Theorem~1.3(c)]{BNZ}) from our Theorem \ref{thm:multi-limsup}. In the univariate case the coefficients of a $D$-finite power series arise as $f(\Phi^n(c))$ for certain choices of $X$, $\Phi$, $f$, and $c$; see \cite[Section~3.2.1]{DML-book}.
However, in Example~\ref{ex:multi-coeff} below, we construct a rational function in two variables whose coefficients never arise as $f(\Phi_1^{n_1}\circ\Phi_2^{n_2}(c))$ for any choices of $X$, $\Phi_1$, $\Phi_2$, $f$, and $c$.
It is well known that all algebraic functions are $D$-finite (see \cite[Proposition~2.3]{Lipshitz89}).

\begin{example}
\label{ex:multi-coeff}
Let us consider the following rational function
\[
F(z_1,z_2) \coloneqq \frac{1}{(1-z_1z_2)(1-z_1)} = \sum_{n_2\ge 0}\sum_{n_1\ge n_2} z_1^{n_1} z_2^{n_2}.
\]
We let $a_{n_1,n_2} = 1$ if $n_1\ge n_2\ge 0$ and $a_{n_1,n_2} = 0$ if $n_2>n_1\ge 0$, so that
\[
F(z_1,z_2) = \sum_{n_1,n_2\in \NN} a_{n_1,n_2}z_1^{n_1} z_2^{n_2}.
\]
Our goal is to show that there is no choice of algebraic variety $X$, commuting rational self-maps $\Phi_1$, $\Phi_2 \colon X\dra X$, rational function $f\colon X \dra \bP^1$ all defined over $\Qbar$, and a point $c \in X_{\Phi_1,\Phi_2,f}(\Qbar)$ such that
\[
f(\Phi_1^{n_1} \circ \Phi_2^{n_2} (c)) = a_{n_1,n_2},
\]
for sufficiently large $n_1$ and $n_2$.

Suppose to the contrary that such choices exist.
First, as in the proof of Theorem~\ref{thm:multi-limsup} or in Step~\ref{step:set-up} of the proof of Theorem~\ref{thm:uniform-limsup}, we can find a Noetherian topological space $U$ such that $\Phi_1$ and $\Phi_2$ are continuous regular self-maps of $U$, $f \colon U \to \bP^1$ is regular and continuous, and $c\in U(\Qbar)$.
Let $Z_{i,j}$ denote the closure of the set of $\{ \Phi_1^{n_1}\circ \Phi_2^{n_2}(c) : n_1\ge i, n_2\ge j\}$ in $U$.
Then, as in the proof of Lemma~\ref{l:truncated-orbit}, there is some $N$ such that $Z_{N,N} = Z_{i,j}$ for all $i>N$ and $j>N$.
Let $c' \coloneqq \Phi_1^N \circ \Phi_2^N(c)$ and note that the orbit $\cO_{\Phi_1,\Phi_2}(c')$ is dense in $Z \coloneqq Z_{N,N}$.

We now choose a sufficiently large $N$ such that
\begin{equation}
\label{eq:f-values}
f(\Phi_1^{n_1} \circ \Phi_2^{n_2}(c')) = 1 \text{ if } n_1\ge n_2\ge 0 \text{ and } 0 \text{ if } n_2>n_1\ge 0.
\end{equation}
Let $T_1$ (resp. $T_0$) denote $f^{-1}(1)\cap Z$ (resp. $f^{-1}(0)\cap Z$).
Then $c' \in T_1$.
Since the orbit of $c'$ is dense in $Z$ and also contained in the closed set $T_0\cup T_1$, we have $Z=T_0\cup T_1$.
Let $T_{0,1},\ldots,T_{0,r}$ denote the irreducible components of $T_0$ and let $T_{1,1},\ldots,T_{1,s}$ denote the irreducible components of $T_1$.
Clearly, for each $T_{0,i}$, the intersection $\cO_{\Phi_1,\Phi_2}(c')\cap T_{0,i} \neq \emptyset$; pick up an arbitrary $c'_i$ in it.
Then by \eqref{eq:f-values} there exists some fixed $L_i\in \ZZ^+$ large enough such that $f(\Phi_1^{L_i}(c'_i)) = 1$, or $\Phi_1^{L_i}(c'_i) \in T_1$.
It thus follows from the irreducibility of $T_{0,i}$ that $\Phi_1^{L_i}(T_{0,i}) \subseteq T_1$.
Let $L$ be the maximum of the finite $L_i$.
By \eqref{eq:f-values} again, we have $f(\Phi_2^{L+1}(c')) = 0$ which yields that $\Phi_2^{L+1}(c') \in T_{0,i}$ for some $i$.
Hence, $\Phi_1^{L_i} \circ \Phi_2^{L+1}(c') \in \Phi_1^{L_i}(T_{0,i}) \subseteq T_1$ by the choice of $L_i$.
On the other hand, as $L_i < L+1$, we have $\Phi_1^{L_i} \circ \Phi_2^{L+1}(c') \in T_0$, which is absurd.
\end{example}



\begin{thebibliography}{BGKT10}

\bibitem[BC17]{BC17} J. P. Bell and S. Chen, \emph{Power series with coefficients from a finite set}, J. Combin. Theory Ser. A \textbf{151} (2017), 241--253.

\bibitem[BGS]{BGS} J. P. Bell, D. Ghioca, and M. Satriano, \emph{Dynamical uniform bounds for fibers and a gap conjecture}, to appear in Int. Math. Res. Not. IMRN, 15~pp., \doi{10.1093/imrn/rnz257}, \arxiv{1906.08683}.


\bibitem[BGT15]{DML-noetherian} J. P. Bell, D. Ghioca, and T. J. Tucker, \emph{The dynamical Mordell--Lang problem for Noetherian spaces}, Funct. Approx. Comment. Math. \textbf{53} (2015), no. 2, 313--328. 

\bibitem[BGT16]{DML-book} \bysame, \emph{The dynamical Mordell--Lang conjecture}, Mathematical Surveys and Monographs, vol.~210, American Mathematical Society, Providence, RI, 2016.

\bibitem[BNZ]{BNZ} J. P. Bell,  K. D. Nguyen, and U. Zannier, \emph{$D$-finiteness, rationality, and height}, to appear in Trans. Amer. Math. Soc., 15~pp., \doi{10.1090/tran/8046}, \arxiv{1905.06450}.

\bibitem[Bob09]{Bober09} J. Bober, \emph{Factorial ratios, hypergeometric functions, and a family of step functions}, J. London Math. Soc. (2) \textbf{79} (2009), no.~2 422--444. 

\bibitem[BG06]{BG06} E. Bombieri and W. Gubler, \emph{Heights in Diophantine geometry}, New Mathematical Monographs, vol.~4, Cambridge University Press, Cambridge, 2006.

\bibitem[DHRS18]{DHRS18} T. Dreyfus, C. Hardouin, J. Roques, and M. F. Singer, \emph{On the nature of the generating series of walks in the quarter plane}, Invent. Math. {\bf 213} (2018), no. 1, 139--203. 

\bibitem[Gar09]{G09} S. Garoufalidis, \emph{$G$-functions and multisum versus holonomic sequences}, Adv. Math. {\bf 220} (2009), no. 6, 1945--1955. 

\bibitem[GP17]{GP} S. Garrabrant and I. Pak, \emph{Words in linear groups, random walks, automata and $P$-recursiveness}, J. Comb. Algebra {\bf 1} (2017), no. 2, 127--144.

\bibitem[LT19]{LT19} B. Lehmann and S. Tanimoto, \emph{On exceptional sets in Manin's conjecture}, Res. Math. Sci. \textbf{6} (2019), no.~1, Paper No. 12, 41 pp. 

\bibitem[Lip88]{Lipshitz88} L. Lipshitz, \emph{The diagonal of a $D$-finite power series is $D$-finite}, J. Algebra {\bf 113} (1988), no. 2, 373--378.

\bibitem[Lip89]{Lipshitz89} \bysame,  \emph{$D$-finite power series}, J. Algebra \textbf{122} (1989), no.~2, 353--373.

\bibitem[Nor49]{Northcott49} D. G. Northcott,  \emph{An inequality in the theory of arithmetic on algebraic varieties}, Proc. Cambridge Philos. Soc. \textbf{45} (1949), 502--509.

\bibitem[vdPS96]{vdPS} A. J. van der Poorten and I. E. Shparlinski, \emph{On linear recurrence sequences with polynomial coefficients}, Glasg. Math. J. \textbf{38} (1996), 147--155.

\bibitem[Sch79]{Schanuel79} S. H. Schanuel, \emph{Heights in number fields}, Bull. Soc. Math. France \textbf{107} (1979), no.~4, 433--449.

\bibitem[Sil07]{GTM241} J. H. Silverman, \emph{The arithmetic of dynamical systems}, Graduate Texts in Mathematics, vol.~241, Springer, New York, 2007.

\bibitem[Sou]{Sound} K. Soundararajan, \emph{Integral factorial ratios}, preprint, \arxiv{1901.05133}.

\bibitem[Sta80]{Stanley80} R. P. Stanley, \emph{Differentiably finite power series}, European J. Combin. \textbf{1} (1980), no.~2, 175--188.

\bibitem[Sta99]{Stan} \bysame, \emph{Enumerative combinatorics. Vol. 2}, Cambridge Studies in Advanced Mathematics, vol.~62. Cambridge University Press, Cambridge, 1999. 

\bibitem[WZ92]{WZ} H. S. Wilf and D. Zeilberger, \emph{An algorithmic proof theory for hypergeometric (ordinary and ``$q$'') multisum/integral identities}, Invent. Math. {\bf 108} (1992), no. 3, 575--633.


\end{thebibliography}
\end{document}